\theoremstyle{plain}
\newtheorem{theorem}{Theorem}[section]
\newtheorem{lemma}[theorem]{Lemma}
\newtheorem{prop}[theorem]{Proposition}
\newtheorem{cor}[theorem]{Corollary}
\theoremstyle{definition}
\newtheorem{remark}[theorem]{Remark}
\newtheorem{eg}[theorem]{Example}
\numberwithin{equation}{theorem}
\newcommand{\bA}{\mathbb A}
\newcommand{\bP}{\mathbb P}
\newcommand{\cF}{\mathcal F}
\newcommand{\cE}{\mathcal E}
\newcommand{\cO}{\mathcal O}
\newcommand{\cT}{\mathcal T}
\newcommand{\cC}{\mathcal C}
\newcommand{\cK}{\mathcal K}
\newcommand{\cI}{\mathcal I}
\newcommand{\ep}{\varepsilon}
\newcommand{\im}{\mathsf{Im}\,}
\newcommand{\Ker}{\mathsf{Ker}\,}
\newcommand{\Image}{\mathsf{Im}\,}
\newcommand{\coker}{\mathsf{Coker}\,}
\newcommand{\Rad}{\mathsf{Rad}}
\newcommand{\stmod}{\mathsf{stmod}}
\newcommand{\Z}{\mathbb Z}
\newcommand{\wt}{\widetilde}
\def\Spec{\operatorname{Spec}\nolimits}
\def\Tor{\operatorname{Tor}\nolimits}
\def\Coh{\operatorname{Coh}\nolimits}
\def\rk{\operatorname{rk}\nolimits}
\newenvironment{changemargin}[2]{%
  \begin{list}{}{%
    \setlength{\topsep}{0pt}%
    \setlength{\leftmargin}{#1}%
    \setlength{\rightmargin}{#2}%
    \setlength{\listparindent}{\parindent}%
    \setlength{\itemindent}{\parindent}%
    \setlength{\parsep}{\parskip}%
  }%
  \item[]}{\end{list}}
\author{Dave Benson}
\author{Julia Pevtsova$^{*}$}
\title[Modules of constant Jordan type and vector bundles]
{A realization theorem for modules of constant Jordan type and vector bundles}
\thanks{ $^{*}$ partially supported by the NSF}
\begin{document}
\begin{abstract}
Let $E$ be an elementary abelian $p$-group of rank $r$ and let
$k$ be a field of characteristic $p$.
We introduce functors $\cF_i$ from finitely generated 
$kE$-modules of constant Jordan type to vector bundles over projective space
$\bP^{r-1}$. 
The fibers of the functors $\cF_i$ encode complete information about
the Jordan type of the module.

We prove that given any vector bundle $\cF$
of rank $s$ on $\bP^{r-1}$,
there is a $kE$-module $M$ of stable constant Jordan type
$[1]^s$ such that $\cF_1(M)\cong \cF$
if $p=2$, and such that $\cF_1(M) \cong F^*(\cF)$ if $p$ is odd. Here,
$F\colon\bP^{r-1}\to\bP^{r-1}$ is the Frobenius map. 
We prove that the theorem cannot be improved if $p$ is odd, because
if $M$ is any module of stable constant Jordan type $[1]^s$ 
then the Chern numbers $c_1,\dots,c_{p-2}$
of $\cF_1(M)$ are divisible by $p$.
\end{abstract}
\maketitle

\section{Introduction}

The class of modules of constant Jordan type was introduced by 
Carlson, Friedlander and  the second author 
\cite{Carlson/Friedlander/Pevtsova:2008a},
and then consequently studied in 
\cite{Benson:horrocks,Benson:2010a,  
Carlson/Friedlander:2009a,Carlson/Friedlander/Suslin:rank2, 
Friedlander/Pevtsova:constr,Friedlander/Pevtsova:2010a}. 
The connection between modules of constant Jordan 
type and algebraic vector bundles on projective varieties
was first observed and developed by Friedlander and the 
second author in \cite{Friedlander/Pevtsova:constr} 
in the general setting of an arbitrary infinitesimal group scheme.  
In the present paper, we study this connection 
for an elementary abelian $p$-group. 

Let $k$ be a field of characteristic $p$ and let $E$ be an
elementary abelian $p$-group of rank $r$. We define functors
$\cF_i$ ($1\le i \le p$) 
from finitely generated $kE$-modules of constant Jordan
type to vector bundles on projective space $\bP^{r-1}$, capturing
the sum of the socles of the length $i$ Jordan blocks.  
The following is the main theorem of this paper.

\begin{theorem}\label{th:main}
Given any vector bundle $\cF$ of rank $s$ on $\bP^{r-1}$,
there exists a finitely 
generated $kE$-module $M$ of stable constant Jordan type $[1]^s$ such that
\begin{itemize}
\item[(i)] if $p=2$, then $\cF_1(M)\cong\cF$.
\item[(ii)] if $p$ is odd, then $\cF_1(M)\cong F^*(\cF)$, the
pullback of $\cF$ along the Frobenius morphism  $F\colon\bP^{r-1}\to\bP^{r-1}$.
\end{itemize}
\end{theorem}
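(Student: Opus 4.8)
The plan is to realise $\cF$ from a finite resolution of it by direct sums of line bundles, replacing $\cO(-d)$ by the Heller translate $\Omega^{d}(k)$ when $p=2$ — and by the double translate $\Omega^{2d}(k)$ when $p$ is odd, which is where the Frobenius twist comes from — realising the differentials by $kE$-module maps, and assembling the whole resolution into a single module as an iterated mapping cone in $\stmod kE$. I first collect the properties of $\cF_1$ used as input. From its construction, $\cF_1(M)$ is the subquotient of $\cO\otimes_k M$ cut out by $\theta=\sum_i x_i\otimes y_i$ that records the socles of the length-one Jordan blocks; from this one checks that $\cF_1$ commutes with finite direct sums, that $\cF_1(k)=\cO$, that for $p=2$ the module $\Omega^{d}(k)$ has stable constant Jordan type $[1]$ with $\cF_1(\Omega^{d}k)\cong\cO(-d)$ for every $d\in\Z$, while for $p$ odd the same holds for $\Omega^{2d}(k)$ with $\cF_1(\Omega^{2d}k)\cong\cO(-pd)=F^*\cO(-d)$, the extra factor of $p$ coming from $\theta$ being $p$-nilpotent rather than square-zero; and finally that the $\underline{\operatorname{Hom}}_{kE}$-groups between these Heller (double) translates agree, through the Tate cohomology ring $\wt H^*(E,k)$, with the relevant $\operatorname{Hom}$-groups between the corresponding line bundles, so that a map given by a matrix of forms (of $p$-th powers of forms, for odd $p$) is realised by a $kE$-module map.

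Now take a minimal graded free resolution $\mathcal P_\bullet\to\cF$ by finite direct sums of line bundles: sheafify a minimal free resolution over $k[y_1,\dots,y_r]$ of $\bigoplus_n H^0(\bP^{r-1},\cF(n))$, after twisting $\cF$ so that this module is generated in nonnegative degrees. Since $\bP^{r-1}$ is a projective space the resolution is finite, and its differentials are matrices of positive-degree forms. For odd $p$, replace $\mathcal P_\bullet\to\cF$ by $F^*\mathcal P_\bullet\to F^*\cF$, whose terms are sums of the line bundles $\cO(-pd)$ and whose differentials have $p$-th-power forms as entries. Realise each term $\bigoplus_j\cO(-d_{i,j})$ by $M_i=\bigoplus_j\Omega^{d_{i,j}}(k)$ (respectively $\bigoplus_j\Omega^{2d_{i,j}}(k)$) and the differentials by $kE$-module maps, obtaining a finite complex $M_\bullet$ of $kE$-modules; let $M$ be a convolution (iterated mapping cone) of $M_\bullet$ in the triangulated category $\stmod kE$, and regard $M$ as an actual module (determined up to free summands). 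This $M$ is the candidate.

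To check the Jordan type, restrict to a cyclic shifted subgroup $\langle 1+u_\alpha\rangle\cong k[t]/t^p$. Each $M_i\!\downarrow$ is stably the semisimple module $k^{\rk\mathcal P_i}$ — a Heller translate of $k$ over $kC_p$ with even (or, for $p=2$, arbitrary) index — and on these semisimple parts the differentials of $M_\bullet\!\downarrow$ are exactly the matrices of $\mathcal P_\bullet$ (resp.\ $F^*\mathcal P_\bullet$) evaluated at $\alpha$, since the module maps were chosen so that $\cF_1$ returns the corresponding form. A bounded resolution of a vector bundle by vector bundles is locally split, so this complex of vector spaces is exact except at one spot, where it is $\cF|_\alpha\cong k^s$ (resp.\ $F^*\cF|_\alpha\cong k^s$). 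Hence in $\stmod(k[t]/t^p)$ the complex $M_\bullet\!\downarrow$ decomposes as $k^s$ in a single degree together with contractible two-term pieces, so its convolution $M\!\downarrow$ is stably $k^s$; as $\alpha$ was arbitrary, $M$ has stable constant Jordan type $[1]^s$. For $\cF_1(M)$, one argues by induction on the length of $M_\bullet$ that $\cF_1$ carries each partial convolution to the corresponding partial convolution of $\mathcal P_\bullet$ (resp.\ $F^*\mathcal P_\bullet$). The inductive step uses that at each stage the map of vector bundles returned by $\cF_1$ is a subbundle inclusion — once more because the resolution of bundles is locally split, so its syzygies are bundles and the connecting maps inject on fibres — and hence the defining triangle is carried by $\cF_1$ to a short exact sequence of vector bundles. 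Since $\mathcal P_\bullet$ resolves $\cF$, this gives $\cF_1(M)\cong\cF$, and $\cF_1(M)\cong F^*\cF$ when $p$ is odd.

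The main obstacle is exactly this compatibility of $\cF_1$ with mapping cones: $\cF_1$ is a subquotient functor, not exact, so triangles cannot be pushed through it formally. The crux is the lemma that if $\phi$ is a map between $kE$-modules of stable constant Jordan type $[1]^\ast$ whose image $\cF_1(\phi)$ is a subbundle inclusion, then $\cF_1(\operatorname{cone}\phi)\cong\coker\cF_1(\phi)$ and $\operatorname{cone}\phi$ again has stable constant Jordan type $[1]^\ast$; this is proved by restricting to shifted subgroups, where it reduces to the statement that the cone of a split injection of semisimple $k[t]/t^p$-modules is semisimple. The remaining technical point, special to odd $p$, is the computation $\cF_1(\Omega^{2}k)\cong\cO(-p)$ rather than $\cO(-2)$: it is this factor of $p$ that forces the Frobenius twist, and it is mirrored in the sharpness assertion of the introduction by the divisibility of the Chern numbers $c_1,\dots,c_{p-2}$ of $\cF_1(M)$ by $p$.
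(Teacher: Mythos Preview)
Your proposal is correct and follows essentially the same route as the paper: resolve $\cF$ by sums of line bundles, replace $\cO(-d)$ by $\Omega^{\ep d}(k)$ with $\ep=1$ or $2$, realise the differentials through Tate cohomology, and assemble the module as an iterated cone in $\stmod(kE)$. The paper organises the same ingredients slightly differently---your ``crux lemma'' on cones is packaged as the exactness of $\cF_1$ on locally split sequences (Proposition~\ref{exact}), and your claim that the stable $\operatorname{Hom}$-groups ``agree'' with the bundle $\operatorname{Hom}$-groups (which for odd $p$ is really only a map $\rho\colon k[x_1,\dots,x_r]\to k[Y_1,\dots,Y_r]$, not an isomorphism) is made precise as Lemmas~\ref{le:even} and~\ref{le:odd}.
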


The $kE$-modules produced this way are usually large. 
For example, in \cite{Benson:horrocks}, the first author showed how to produce
a finitely generated $kE$-module $M$ of constant Jordan type 
such that $\cF_2(M)$ is isomorphic to the rank two Horrocks--Mumford
bundle on $\bP^4$. In this case, the
construction used to prove our main theorem 
produces a module $M$ of dimension many hundred times $p^5$ plus 
two such that $\cF_1(M)\cong F^*(\cF_\mathsf{HM})$, whereas
the construction in \cite{Benson:horrocks} produces a module of 
dimension $30p^5$ of stable constant Jordan type 
$[p-1]^{30}[2]^2[1]^{26}$ such that applying $\cF_2$ gives 
$\cF_\mathsf{HM}(-2)$.

The theorem for $p=2$ may be thought of as a version of the
Bernstein--Gelfand--Gelfand correspondence
\cite{Bernstein/Gelfand/Gelfand:1978a}, since the group algebra of
an elementary abelian $2$-group in characteristic two is isomorphic to
an exterior algebra. But for $p$ odd it says something new
and interesting. In particular, it is striking that 
the $p$ odd case of Theorem \ref{th:main} cannot be strengthened to
say that $\cF_1(M)\cong \cF$. The following theorem, which is
proved in Section \ref{se:Chern}, gives limitations
on the vector bundles appearing as $\cF_1(M)$ with $M$ of stable constant
Jordan type $[1]^s$.

\begin{theorem}\label{th:cp-2}
Suppose that $M$ has stable constant Jordan type $[1]^s$.
Then $p$ divides the Chern numbers $c_m(\cF_1(M))$ for $1\le m\le p-2$.
\end{theorem}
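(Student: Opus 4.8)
The plan is to compute the total Chern class of $\cF_1(M)$ by exhibiting it as a $K$-theory combination of structure sheaves twisted by line bundles, using the explicit Jordan type hypothesis. First I would recall the construction of $\cF_1$: for a module $M$ of constant Jordan type $[1]^s$, fix a basis $x_1,\dots,x_r$ of the radical of $kE$ modulo its square and form the generic nilpotent operator $\theta = \sum_{i=1}^r X_i x_i$ acting on $M \otimes_k \cO_{\bP^{r-1}}(?)$ over $\bP^{r-1}$; then $\cF_1(M)$ is (a twist of) $\Ker\theta/\im\theta^{p-1}$, or more precisely the sheaf built from the socles of the length-one Jordan blocks. The key structural point is that over $\bP^{r-1}$ we have the tautological exact sequence relating $\cO$, $\cO(1)^r$ and so on, so $\theta^j$ and $\theta^{p-j}$ fit into a complex of free sheaves whose cohomology sheaves are the $\cF_i(M)$.

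The main step is a $K$-theoretic identity. Working in the Grothendieck group $K^0(\bP^{r-1})$, the two-periodic-up-to-twist complex
\[
\cdots \longrightarrow M\otimes\cO(-1) \stackrel{\theta}{\longrightarrow} M\otimes\cO \stackrel{\theta^{p-1}}{\longrightarrow} M\otimes\cO(1-p) \stackrel{\theta}{\longrightarrow} \cdots
\]
(with appropriate twists) has Euler characteristic zero, and since $M$ has constant Jordan type all the other cohomology sheaves vanish, so $[\cF_1(M)]$ is expressed as an explicit alternating sum $\sum_{j} (-1)^j \binom{?}{?}[M]\,[\cO(a_j)]$ of twists of the (constant) class $[M] = (\dim_k M)[\cO]$. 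Pushing this through the ring structure $K^0(\bP^{r-1}) \cong \Z[h]/(h^r)$ with $h = c_1(\cO(1))$, one finds $\operatorname{ch}(\cF_1(M)) = (\dim_k M)\cdot g(h)$ for an explicit universal polynomial $g$ coming from the binomial identities governing $p$th powers of nilpotent Jordan blocks — essentially the same combinatorics as in the definition of the $\cF_i$. The claim that $c_1,\dots,c_{p-2}$ are divisible by $p$ then becomes the assertion that $g(h) \equiv$ (a power series with no terms of degree $1$ through $p-2$ mod $p$), equivalently that the relevant binomial coefficients $\binom{p}{j}$ or sums thereof vanish mod $p$ in the required range. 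Indeed $\binom{p}{j} \equiv 0 \pmod p$ for $1 \le j \le p-1$ is exactly the input, and one must check that the truncation at $h^r$ and the passage from $\operatorname{ch}$ to $c_m$ (via Newton's identities) does not destroy this — the Newton identities express $c_m$ as a polynomial with integer coefficients in the power sums $p_1,\dots,p_m$ (up to a denominator that is a unit mod $p$ when $m \le p-1$), so divisibility of $\operatorname{ch}_m$ by $p$ for $1\le m\le p-2$ transfers to $c_m$.

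The step I expect to be the main obstacle is getting the $K$-theory identity exactly right: one has to be careful about which twist of $\cF_1(M)$ is meant, about the precise free resolution-style complex realizing $\cF_1$ (the powers $\theta$ versus $\theta^{p-1}$ alternate, so the complex is not simply two-periodic but two-periodic after a Serre twist by $\cO(p-2)$ or similar), and about whether the ``stable'' hypothesis (as opposed to just constant Jordan type $[1]^s$) is needed — I expect it is used precisely to kill projective summands so that the Euler-characteristic computation sees only the interesting part and the ambient free modules contribute a class that is an integer multiple of $[\cO]$, hence with trivial higher Chern classes. A secondary subtlety is that the argument only gives divisibility up to degree $p-2$ rather than $p-1$: the coefficient of $h^{p-1}$ in $g$ will be $\binom{p}{p-1}/(\text{something})$ combined with a contribution that need not vanish mod $p$, which is consistent with the sharpness remarks following Theorem \ref{th:main}, so I would double-check the edge case $m = p-1$ to confirm the statement is not off by one.
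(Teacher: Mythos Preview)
Your proposal has a genuine gap at the $K$-theory step. You write down an infinite two-periodic (up to twist) complex built from $\theta$ and $\theta^{p-1}$ and assert that ``Euler characteristic zero'' together with vanishing of ``the other cohomology sheaves'' expresses $[\cF_1(M)]$ as an alternating sum of twists of $[\cO]$. But for stable type $[1]^s$ the complex you wrote is not acyclic away from one spot: at \emph{every} position the cohomology of $\dots\xrightarrow{\theta}\wt M\xrightarrow{\theta^{p-1}}\wt M(p-1)\xrightarrow{\theta}\dots$ is (a twist of) $\Ker\theta/\im\theta^{p-1}$ or $\Ker\theta^{p-1}/\im\theta$, each of which is filtered by the $\cF_i$ with $1\le i\le p-1$ and hence equals a twist of $\cF_1(M)$ under the hypothesis. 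So there is no finite Euler-characteristic identity of the shape you want. If instead you use the filtration of $\wt M$ by the $\cF_{i,j}$ (which is the correct finite substitute), the identity you get in $K^0(\bP^{r-1})$ is
\[
(\dim_k M)\,[\cO]=[\wt M]=[\cF_1(M)]+\sum_{j=0}^{p-1}[\cF_p(M)(j)],
\]
and now the right-hand side involves the unknown bundle $\cF_p(M)$, not just line bundles. Your formula $\operatorname{ch}(\cF_1(M))=(\dim_k M)\cdot g(h)$ with a universal $g$ cannot hold: it would force $\cF_1(M)$ to depend only on $\dim_k M$, which is false.

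The missing idea is precisely how to dispose of the $\cF_p(M)$ contribution without knowing anything about it. The paper does this multiplicatively with total Chern classes rather than additively with Chern characters: the Whitney formula gives $1=c(\wt M,h)=c(\cF_1(M),h)\prod_{j=0}^{p-1}c(\cF_p(M)(j),h)$, and the key computation (Lemma~\ref{le:product-twists}) is that for \emph{any} rank-$t$ bundle $\cF$ one has
\[
\prod_{j=0}^{p-1}c(\cF(j),h)\equiv 1-th^{p-1}\pmod{(p,h^p)},
\]
proved by writing the product over Chern roots and invoking $x(x+y)\cdots(x+(p-1)y)\equiv x^p-xy^{p-1}\pmod p$. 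That is the real source of the $p$-divisibility and the reason the bound stops at $m=p-2$; your $\binom{p}{j}\equiv 0$ heuristic is pointing at the right phenomenon but is attached to the wrong identity.
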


The paper is organized as follows. In Section~\ref{se:setup} 
we give basic definitions 
of the  functors $\cF_i$ and show that applied to modules of 
constant Jordan type, they 
produce algebraic vector bundles. Section~\ref{se:prop} 
analyzes behavior of the functors $\cF_i$ 
with respect to Heller shifts and duals. 
This analysis plays a key role in the proof of  our main theorem.  
\iffalse
In Section \ref{se:examples}, we give a number of examples of
modules of constant Jordan type and the associated vector bundles,
including a 30 dimensional module $M$ for $(\Z/2)^6$ in characteristic
two such that $\cF_1(M)$ is the rank two Tango bundle on $\bP^5$.
\fi
Theorems~\ref{th:main} and~\ref{th:cp-2} 
are proved in Sections~\ref{se:main} and~\ref{se:Chern} respectively.

%%%%%%%%%%%%%%%%%%%%%%%%%%%%%%%Section 2%%%%%%%%%%%%%%%%%%%%%%%%%%%%%

\section{Definition of the functors $\cF_i$}\label{se:setup}

Let $k$ be a  perfect field of characteristic $p$.
Let $E=\langle g_1,\dots,g_r\rangle$ be an elementary abelian 
$p$-group of rank $r$, and set $X_i=g_i-1\in kE$ for $1\le i \le r$.
Let $J(kE) = \langle X_1, \ldots, X_r\rangle$ 
be the augmentation ideal of $kE$.
The images of $X_1,\dots,X_r$ form a basis for $J(kE)/J^2(kE)$,
which we think of as affine space $\bA_k^r$ over $k$.  
Let $K/k$ be a field extension.
If $0\ne \alpha=(\lambda_1,\dots,\lambda_r)\in\bA_K^r$, we define 
\[ X_\alpha=\lambda_1 X_1 + \dots + \lambda_r X_r \in KE. \]
This is an element of $J(KE)$ satisfying $X_\alpha^p=0$.
If $M$ is a finitely generated $kE$-module then $X_\alpha$ acts
nilpotently on $M_K = M \otimes K$, and we can decompose $M_K$ 
into Jordan blocks.
They all have eigenvalue zero, and length between $1$ and $p$.
We say that $M$ has {\em constant Jordan type\/} $[p]^{a_p}\dots [1]^{a_1}$
if there are $a_p$ Jordan blocks of length $p$, \dots, $a_1$
blocks of length $1$, independently of choice of $\alpha$.
Since $a_p$ is determined by $a_{p-1},\dots,a_1$ and the dimension
of $M$, we also say that $M$ has {\em stable constant Jordan type\/}
$[p-1]^{a_{p-1}}\dots [1]^{a_1}$. Note that the property of having 
{\it constant Jordan type} and the type itself do not depend on the choice 
of generators $\langle g_1,\dots,g_r\rangle$ 
(see \cite{Carlson/Friedlander/Pevtsova:2008a}).

We write $k[Y_1,\dots,Y_r]$ for the coordinate ring $k[\bA^r]$,
where the $Y_i$ are the linear functions defined by $Y_i(X_j)=\delta_{ij}$
(Kronecker delta). We write $\bP^{r-1}$ for the corresponding projective 
space. Let $\cO$ be the structure sheaf on $\bP^{r-1}$. If $\cF$ is
a sheaf of $\cO$-modules and $j\in\Z$, we write $\cF(j)$ 
for the $j$th Serre twist $\cF\otimes_\cO \cO(j)$.
If $M$ is a finitely generated $kE$-module, we write $\wt M$ for
the trivial vector bundle $M \otimes_k \cO$, so that $\wt M(j)=
M \otimes_k \cO(j)$. 
Friedlander and the second author \cite[\S4]{Friedlander/Pevtsova:constr} 
define a map of vector bundles $\theta_M\colon \wt M \to \wt M(1)$ 
by the formula 
\[ \theta_M(m \otimes f) = \sum_{i=1}^r X_i(m)\otimes Y_i f. \]
By abuse of notation we also write $\theta_M$ for the twist
$\theta_M(j)\colon\wt M(j)\to\wt M(j+1)$. With this convention we
have $\theta_M^p=0$.

We define functors $\cF_{i,j}$ for $0\le j <  i \le p$ 
from finitely generated $kE$-modules to coherent  sheaves on $\bP^{r-1}$ 
by taking the following subquotients of $\wt M$:
\[ \cF_{i,j}(M)=\frac{\Ker\theta_M^{j+1}\cap\im\theta_M^{i-j-1}}
{(\Ker\theta_M^{j+1}\cap\im\theta_M^{i-j})+
(\Ker\theta_M^j\cap\im\theta_M^{i-j-1})} \]
We then define
\[ \cF_i(M) = \cF_{i,0}(M)=\frac{\Ker\theta_M\cap\im\theta_M^{i-1}}
{\Ker\theta_M\cap\im\theta_M^i} \]
For a  point $0\not =\alpha \in \bA^r$ and the corresponding operator 
$X_\alpha\colon M \to M$, we also define 
\[ \cF_{i, \alpha}(M) = \frac{\Ker X_\alpha\cap\im X_\alpha^{i-1}}
{\Ker X_\alpha\cap\im X_\alpha^{i}} \]
Note that $\cF_{i,\alpha}(M)$ is evidently well-defined for 
$\bar \alpha \in \bP^{r-1}$. 

In the next Proposition we show that functors $\cF_i$ take  
modules of constant Jordan type to algebraic vector bundles 
(equivalently, locally free sheaves), 
and that they commute with specialization. 

\begin{prop}\label{p:bundles}\ 

\begin{enumerate} 
\item Let $M$ be a $kE$-module of constant 
Jordan type $[p]^{a_p}\dots [1]^{a_1}$. Then
the sheaf $\cF_i(M)$ is locally free of rank $a_i$. 
\item Let $f\colon M \to N$ be a map of modules of constant Jordan type. 
For any point $\bar\alpha=[\lambda_1:\dots:\lambda_r] \in \bP^{r-1}$ 
with residue field $k(\bar\alpha)$ we have a commutative diagram 
\[\xymatrix{ \cF_i(M) \otimes_\cO k(\bar\alpha)\ar[d]^{\simeq} 
\ar[r]^{\cF_i(f)} &\cF_i(N)\otimes_\cO k(\bar\alpha)\ar[d]^{\simeq}\\
\cF_{i,\alpha}(M) \ar[r] & \cF_{i, \alpha}(N)} \]
\end{enumerate}
\end{prop}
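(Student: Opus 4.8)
The plan is to establish both statements by a standard semicontinuity/base-change argument adapted to the explicit subquotient description of $\cF_i$. For part (1), the key point is that $\cF_i(M)$ is a coherent sheaf on $\bP^{r-1}$ whose fiber dimension at each closed point $\bar\alpha$ is constant, equal to $a_i$, and hence—being a coherent sheaf on a reduced (indeed integral) scheme with locally constant fiber rank—is locally free of rank $a_i$. So the real content is identifying the fiber $\cF_i(M)\otimes_\cO k(\bar\alpha)$ with $\cF_{i,\alpha}(M)$, which is exactly what part (2) asks for (with $f=\mathrm{id}$), and computing its dimension from the constant Jordan type hypothesis.

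The main technical step is therefore the base-change isomorphism in (2). First I would work locally: over a standard affine open $U_i=\Spec k[Y_1,\dots,Y_r]_{Y_i}$ (or better, localize at a homogeneous prime), everything becomes a statement about finitely generated modules over a commutative ring $R$, with $\theta_M$ a nilpotent $R$-linear endomorphism of the free module $\wt M|_U$. The point $\bar\alpha$ corresponds to a maximal ideal $\mathfrak m$ with residue field $K=k(\bar\alpha)$, and $\theta_M\otimes K$ is (up to the unit $Y_i(\alpha)$) the operator $X_\alpha$ on $M_K$. The obstacle is that kernels, images, intersections and sums of submodules do \emph{not} in general commute with $\otimes_R K$: there are natural maps $(\Ker\theta_M)\otimes K\to\Ker(\theta_M\otimes K)$ and $(\im\theta_M^{i-1})\otimes K\to\im(\theta_M^{i-1}\otimes K)$, but these need not be isomorphisms for an arbitrary module. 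The remedy is to use the constant Jordan type hypothesis to show that the relevant submodules of $\wt M|_U$ are themselves \emph{locally free direct summands}, after possibly shrinking $U$. Concretely, the ranks of $\theta_M^j$ as $R$-linear maps are lower-semicontinuous functions on $U$, while constant Jordan type forces each $\rk(X_\alpha^j)$ to be independent of $\bar\alpha$; combining these, each $\rk\theta_M^j$ is locally constant, so $\im\theta_M^j$ and $\Ker\theta_M^j$ are locally free direct summands of $\wt M|_U$ of constant rank, and likewise for the intersections $\Ker\theta_M^{j+1}\cap\im\theta_M^{i-j-1}$ etc. Once all the submodules in the defining formula are locally free summands with locally constant rank, forming the subquotient commutes with the exact functor $-\otimes_R K$, giving the desired identification $\cF_i(M)\otimes k(\bar\alpha)\cong\cF_{i,\alpha}(M)$, and the dimension count using the Jordan normal form of $X_\alpha$ gives $\dim\cF_{i,\alpha}(M)=a_i$.

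For the naturality square in (2), once the fiber identification is in place it is essentially formal: $\cF_i(f)$ is induced by $\wt f\colon\wt M\to\wt N$, which commutes with $\theta_M,\theta_N$ by definition of $\theta$, hence with the subquotients; tensoring the whole diagram with $k(\bar\alpha)$ and using that $\wt f\otimes k(\bar\alpha)$ is (the bundle map associated to) $f\otimes K$, which intertwines $X_\alpha$ on $M_K$ and on $N_K$, yields commutativity. The one thing to check carefully is that the vertical identifications are the \emph{same} ones on both sides—i.e.\ that the isomorphism constructed in the first part is natural in the module—but this follows because it was built out of the canonical comparison maps $(\Ker\theta^j)\otimes K\to\Ker(\theta^j\otimes K)$ and their analogues for images, which are manifestly functorial.

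The step I expect to be the main obstacle is pinning down that the intersection $\Ker\theta_M^{j+1}\cap\im\theta_M^{i-j-1}$ (and the sum in the denominator) is a locally free direct summand of locally constant rank: semicontinuity handles kernels and images separately, but their intersection requires a small additional argument—e.g.\ computing its generic rank via the Jordan type at the generic point and then invoking, again, that the Jordan type is constant to rule out jumps—so this is where the hypothesis of constant (as opposed to merely generic) Jordan type is genuinely used.
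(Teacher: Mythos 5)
Your overall plan is sound, and it agrees with the paper in its two main ingredients: (i) a coherent sheaf on the reduced scheme $\bP^{r-1}$ with locally constant fiber dimension is locally free, and (ii) the content is really the base-change identification $\cF_i(M)\otimes k(\bar\alpha)\cong\cF_{i,\alpha}(M)$, from which both the rank count and the naturality square follow formally. You also correctly isolate the danger: taking kernels, images, and intersections does not commute with $\otimes k(\bar\alpha)$. Semicontinuity plus constant Jordan type does show that $\im\theta_M^j$ and $\Ker\theta_M^j$ are locally free of constant rank (this is in essence what the paper imports from Friedlander--Pevtsova 4.13), so that part is fine.

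The genuine gap is exactly the step you flag yourself: passing from $\im\theta^j$, $\Ker\theta^j$ locally free to $\Ker\theta\cap\im\theta^{i-1}$ locally free. The proposed fix (compute generic rank and ``use constant Jordan type to rule out jumps'') does not close it, because the constant Jordan type hypothesis controls $\dim\bigl(\Ker X_\alpha\cap\im X_\alpha^{i-1}\bigr)$, not the fiber dimension of the subsheaf $\Ker\theta\cap\im\theta^{i-1}$, and the natural comparison map between them need be neither injective nor surjective. Fiber dimension of a coherent sheaf is upper semicontinuous, so all you get this way is an inequality $\ge$ against the generic rank, and there is no mechanism to produce the matching upper bound. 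Even with both submodules locally free direct summands the intersection can drop: in $k[x]^2$, the summands $\langle(1,0)\rangle$ and $\langle(1,x)\rangle$ intersect in $0$, but their fibers at $x=0$ intersect in a line. So ``likewise for the intersections'' is not justified.

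The missing idea is a reframing that makes the intersection disappear. Observe that
\[ \Ker\theta\cap\im\theta^{m}\;=\;\Ker\bigl(\theta\colon\im\theta^{m}\twoheadrightarrow\im\theta^{m+1}\bigr), \]
so the intersection is not a general intersection of submodules but the kernel of a \emph{surjection between locally free sheaves}. Such a surjection is locally split (the target is locally projective), hence its kernel is again locally free and the short exact sequence
\[ 0\to\Ker\theta\cap\im\theta^{m}\to\im\theta^{m}\xrightarrow{\ \theta\ }\im\theta^{m+1}\to 0 \]
stays exact after $\otimes k(\bar\alpha)$. Combined with the identification of fibers of $\im\theta^{m}$ with $\im X_\alpha^{m}$, this gives $\bigl(\Ker\theta\cap\im\theta^{m}\bigr)\otimes k(\bar\alpha)\cong\Ker X_\alpha\cap\im X_\alpha^{m}$, after which your dimension count and the conclusion for $\cF_i(M)$ go through as you describe. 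Note also that for this proposition only $\cF_i=\cF_{i,0}$ is at stake, so you never need the more complicated intersections and sums appearing in $\cF_{i,j}$ for $j>0$.
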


\begin{proof} (1). Since the  module $M$ is fixed throughout the proof, 
we shall use $\theta$ to denote $\theta_M$.

Note that $\Ker\theta\cap\im\theta^i = \Ker\{\theta\colon
\im\theta^i \to \im\theta^{i+1} \}$. 
Hence, we have a short exact sequence 
\begin{equation}\label{eq:inter}
\xymatrix{0 \ar[r]& \Ker\theta\cap\im\theta^i\ar[r]&
\im\theta^i \ar[r]^{\theta} & \im\theta^{i+1} \ar[r] & 0 }
\end{equation} 
Since $M$ has constant Jordan type, $\im\theta^{i}$ 
is locally free by \cite[4.13]{Friedlander/Pevtsova:constr}.  
Therefore, specialization of the sequence (\ref{eq:inter}) 
at any point $\bar \alpha = 
[\lambda_1: \dots :\lambda_r]$ of $\bP^{r-1}$ yields a short 
exact sequence of vector spaces 
\begin{equation}\label{eq:fibers}
0 \to (\Ker\theta\cap\im\theta^{i}) \otimes_{\cO} k(\bar \alpha) 
\to\im\theta^{i} \otimes_{\cO} k(\bar \alpha)  \to \im\theta^{i+1} 
\otimes_{\cO} k(\bar \alpha) \to 0. 
\end{equation} 
By \cite[4.13]{Friedlander/Pevtsova:constr}, 
$\im\theta^{i} \otimes_{\cO} k(\bar \alpha) 
\simeq \im \{X^i_\alpha\colon M \to M \}$. In particular, the dimension of 
fibers of $\im\theta^{i}$ 
is constant and equals $\sum\limits_{j=i+1}^p a_j(j-i)$.  
We can rewrite the sequence (\ref{eq:fibers}) as 
$$
\xymatrix{0 \ar[r]& (\Ker\theta\cap\im\theta^{i}) \otimes_{\cO} 
k(\bar \alpha) 
\ar[r]&\im X_\alpha^i  \ar[r]^{X_\alpha} & \im X_\alpha^{i+1} \ar[r] & 0 }.
$$
Hence the fiber of $\Ker\theta\cap\im\theta^i$ at a point 
$\bar \alpha$ equals $\Ker X_\alpha \cap \im X_\alpha^i$. 
In particular, $\Ker\theta\cap\im\theta^i$  
has fibers of constant dimension, equal to 
\[ \sum\limits_{j=i+1}^p a_j(j-i) - \sum\limits_{j=i+2}^p a_j(j-i-1) 
= \sum\limits_{j=i+1}^p a_j. \]
Applying \cite[4.10]{Friedlander/Pevtsova:constr} 
(see also \cite[V. ex. 5.8]{Hartshorne:1977a}), 
we conclude that $\Ker\theta\cap\im\theta^i$ is 
locally free of rank $\sum\limits_{j=i+1}^p a_j$.

Consider the short exact sequence that defines $\cF_i(M)$:
\begin{equation*}\xymatrix{0 \ar[r]& \Ker\theta\cap\im\theta^{i} \ar[r] & 
\Ker\theta\cap\im\theta^{i-1} \ar[r]&\cF_i(M)  \ar[r] & 0. }
\end{equation*}
Specializing at $\bar \alpha$, we get 
%
\iffalse
\vspace{0.3in}
\begin{changemargin}{-0.5cm}{0cm}
\noindent$
\begin{xy}*!C\xybox{%
\xymatrix@=4mm{(\Ker\theta\cap\im\theta^{i}(-i))\otimes_{\cO} 
k( \bar \alpha) \ar[r] \ar@{=}[d] & 
(\Ker\theta\cap\im\theta^{i-1}(1-i))\otimes_{\cO} k( \bar \alpha) 
\ar[r]\ar@{=}[d]&\cF_i(M) \otimes_{\cO} k( \bar \alpha)  \ar[r]\ar@{=}[d] & 0 \\
\Ker X_\alpha \cap\im X_\alpha^i\ar[r] & 
\Ker X_\alpha \cap\im X_\alpha^{i-1} \ar[r]&\cF_i(M) 
\otimes_{\cO} k( \bar \alpha)  \ar[r] & 0 
}}
\end{xy}
$
\end{changemargin}

\vspace{0.3in}
\noindent
\fi
%
{%\small
\[ \xymatrix@C=4mm{(\Ker\theta\cap\im\theta^i)\otimes_{\cO} 
k( \bar \alpha) \ar[r] \ar@{=}[d] & 
(\Ker\theta\cap\im\theta^{i-1})\otimes_{\cO} k( \bar \alpha)
\ar[r]\ar@{=}[d]&\cF_i(M) \otimes_{\cO} k( \bar \alpha) 
\ar[r]\ar@{=}[d] & 0 \\
\Ker X_\alpha \cap\im X_\alpha^i\ar[r] & 
\Ker X_\alpha \cap\im X_\alpha^{i-1} \ar[r]&\cF_i(M) 
\otimes_{\cO} k( \bar \alpha)  \ar[r] & 0 
} \]
}
The first arrow of the bottom row is clearly an injection. 
Hence, 
\[ \dim (\cF_i(M) \otimes_{\cO} k( \bar \alpha)) = 
\sum\limits_{j=i}^p a_j - \sum\limits_{j=i+1}^p a_j = a_i \] 
for any point $\alpha \in \bP^{r-1}$. 
Applying \cite[4.10]{Friedlander/Pevtsova:constr} again, we conclude that 
$\cF_i(M)$ is locally free (of rank $a_i$). 

Statement (2) follows immediately by applying the last diagram  
to both $M$ and $N$. 
\end{proof}

\begin{lemma}
$\wt M$ has a filtration in which the filtered quotients are
isomorphic to $\cF_{i,j}(M)$ for $0\le j<i\le p$.
\end{lemma}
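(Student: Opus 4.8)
The plan is to obtain the filtration by interleaving the two evident filtrations of $\wt M$ coming from $\theta_M$: the descending filtration by the subsheaves $\im\theta_M^b$ ($0\le b\le p$, with $\im\theta_M^0=\wt M$ and $\im\theta_M^p=0$ since $\theta_M^p=0$), and the ascending filtration by $\Ker\theta_M^a$ ($0\le a\le p$, with $\Ker\theta_M^0=0$ and $\Ker\theta_M^p=\wt M$). For $0\le b\le p-1$ and $0\le a\le p$ set
\[ L_{b,a}=(\Ker\theta_M^a\cap\im\theta_M^b)+\im\theta_M^{b+1}\subseteq\wt M. \]
For fixed $b$ this increases with $a$, with $L_{b,0}=\im\theta_M^{b+1}$ and $L_{b,p}=\im\theta_M^b$; since $L_{b,p}=\im\theta_M^b=L_{b-1,0}$, concatenating these chains for $b=p-1,p-2,\dots,0$ gives a filtration
\[ 0=L_{p-1,0}\subseteq L_{p-1,1}\subseteq\dots\subseteq L_{p-1,p}=L_{p-2,0}\subseteq\dots\subseteq L_{0,p}=\wt M \]
of $\wt M$, and it suffices to identify the successive quotients $L_{b,a+1}/L_{b,a}$.

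Here I would use only formal properties of the abelian category $\Coh(\bP^{r-1})$, chiefly that the lattice of subsheaves of $\wt M$ is modular. Writing $X=\Ker\theta_M^{a+1}\cap\im\theta_M^b$, $Z=\Ker\theta_M^a\cap\im\theta_M^b$ (so $Z\subseteq X$) and $Y=\im\theta_M^{b+1}$, the second isomorphism theorem gives $L_{b,a+1}/L_{b,a}=(X+Y)/(Z+Y)\cong X/(X\cap(Z+Y))$, while the modular law gives $X\cap(Z+Y)=Z+(X\cap Y)=(\Ker\theta_M^a\cap\im\theta_M^b)+(\Ker\theta_M^{a+1}\cap\im\theta_M^{b+1})$. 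Hence
\[ L_{b,a+1}/L_{b,a}\cong\frac{\Ker\theta_M^{a+1}\cap\im\theta_M^b}{(\Ker\theta_M^a\cap\im\theta_M^b)+(\Ker\theta_M^{a+1}\cap\im\theta_M^{b+1})}, \]
which is precisely $\cF_{i,j}(M)$ with $i=a+b+1$, $j=a$.

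Finally I would sort out the indexing. The assignment $(a,b)\mapsto(i,j)=(a+b+1,a)$ restricts to a bijection between $\{(a,b):a,b\ge0,\ a+b\le p-1\}$ and $\{(i,j):0\le j<i\le p\}$, so these quotients are exactly the sheaves named in the statement, each appearing once; and the remaining successive quotients — those with $a+b\ge p$ — are zero, because if $x=\theta_M^b y\in\Ker\theta_M^{a+1}\cap\im\theta_M^b$ then $\theta_M^a x=\theta_M^{a+b}y=0$ (as $\theta_M^p=0$), so $x$ already lies in the denominator $\Ker\theta_M^a\cap\im\theta_M^b$. Omitting these trivial steps leaves the desired filtration.

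I expect the only real content to be the first paragraph — guessing the correct interleaving, i.e.\ the right formula for $L_{b,a}$, so that the graded pieces come out as the $\cF_{i,j}(M)$ themselves rather than some reindexed or Serre-twisted variant — together with keeping the organizing pair $(a,b)$ carefully distinct from the functor labels $(i,j)$. The subsequent manipulations (the modular law, the second isomorphism theorem, the vanishing argument) are formal and should go through verbatim; the one point to watch is to carry them out with the same conventions, Serre twists included, under which $\cF_{i,j}(M)$ was introduced as a subquotient of $\wt M$.
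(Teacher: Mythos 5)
Your proposal is correct and takes essentially the same approach as the paper: interleave the kernel and image filtrations of $\wt M$, then identify the graded pieces via the second isomorphism theorem and the modular law, with the quotients becoming trivial once the exponents add up to $p$ or more. The only cosmetic difference is that the paper refines the kernel filtration by the image filtration, whereas you refine the image filtration by the kernel filtration; the computation of the subquotients is identical.
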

\begin{proof} We consider two filtrations  on $\wt M$, the 
``kernel filtration" and the ``image filtration":
\begin{gather*}
0 \subset \Ker \theta_M \subset \ldots \subset \Ker \theta_M^{p-1} \subset 
\wt M \\
0 = \Image \theta_M^p \subset \Image \theta_M^{p-1} \subset 
\ldots \subset \Image \theta_M \subset \Image \theta_M^0=\wt M
\end{gather*}
To simplify notation, we set $\cK_j = \Ker \theta^i_M$ and 
$\cI_i = \Image \theta_M^{p-i}$.  
Using the standard refinement procedure, we refine the kernel 
filtration by the image filtration:
$$ \cK_j \subset (\cK_{j+1}\cap \cI_1) + \cK_j \subset \ldots 
\subset (\cK_{j+1}\cap \cI_{\ell}) + \cK_j \subset (\cK_{j+1}\cap \cI_{\ell+1}
) + \cK_j \subset \ldots \subset \cK_{j+1}$$  
For any three sheaves $A, B, C$  with $ B\subset A$, the second 
isomorphism theorem and the modular law imply that
\[ \frac{A+C}{B+C} \simeq \frac{A+(B+C)}{B+C} \simeq 
\frac{A}{A\cap(B+C)}\simeq \frac{A}{B+(A\cap C)}. \]
Hence, we can identify the subquotients of the refined kernel 
filtration above as
$$ \frac{(\cK_{j+1}\cap \cI_{\ell+1}) + \cK_j}{(\cK_{j+1}\cap 
\cI_{\ell}) + \cK_j} \simeq 
\frac{\cK_{j+1}\cap \cI_{\ell+1}}{(\cK_{j+1}\cap \cI_{\ell}) + 
(\cK_{j}\cap \cI_{\ell+1})}
$$
Setting $i=p-\ell+j$, we get that the latter quotient is precisely 
$\cF_{i,j}(M)$ (note that when $j > \ell$, the corresponding subquotient 
 is trivial). 
\end{proof}

\begin{lemma}\label{le:Fij}
For $0\le j<i$, we have a natural isomorphism $\cF_{i,j}(M)\cong\cF_i(M)(j)$.
\end{lemma}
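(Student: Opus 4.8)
The plan is to realize the claimed isomorphism by the morphism $\theta_M^j$ itself. Write $\theta$ for $\theta_M$, and recall that since twisting by $\cO(j)$ is exact, $\cF_i(M)(j)$ is the quotient of $(\Ker\theta\cap\im\theta^{i-1})(j)$ by $(\Ker\theta\cap\im\theta^i)(j)$. The morphism $\theta^j\colon\wt M\to\wt M(j)$ carries the numerator $\Ker\theta^{j+1}\cap\im\theta^{i-j-1}$ of $\cF_{i,j}(M)$ into $(\Ker\theta\cap\im\theta^{i-1})(j)$: if $\theta^{j+1}x=0$ then $\theta(\theta^jx)=0$, and if $x=\theta^{i-j-1}z$ then $\theta^jx=\theta^{i-1}z$. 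The same bookkeeping shows that $\theta^j$ sends the denominator $(\Ker\theta^{j+1}\cap\im\theta^{i-j})+(\Ker\theta^j\cap\im\theta^{i-j-1})$ of $\cF_{i,j}(M)$ into $(\Ker\theta\cap\im\theta^i)(j)$: the summand $\Ker\theta^j\cap\im\theta^{i-j-1}$ is annihilated by $\theta^j$, while on $\Ker\theta^{j+1}\cap\im\theta^{i-j}$ the argument above (with $i-j$ in place of $i-j-1$) lands the image in $\Ker\theta\cap\im\theta^i$. Hence $\theta^j$ induces a morphism $\overline{\theta^j}\colon\cF_{i,j}(M)\to\cF_i(M)(j)$, natural in $M$ because $\theta$ commutes with $kE$-module homomorphisms.

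It then remains to prove that $\overline{\theta^j}$ is an isomorphism. Since $\Ker$, $\im$, and preimages of coherent subsheaves all commute with passage to stalks, surjectivity and injectivity can be checked stalkwise, where the assertions become formal identities for the nilpotent operator $\theta^j$. For surjectivity it suffices to see that $\theta^j$ maps the numerator of $\cF_{i,j}(M)$ \emph{onto} that of $\cF_i(M)(j)$: given $y=\theta^{i-1}z\in\Ker\theta\cap\im\theta^{i-1}$, the element $x=\theta^{i-j-1}z$ satisfies $\theta^jx=y$ and $\theta^{j+1}x=\theta^iz=\theta y=0$, so $x$ lies in the numerator of $\cF_{i,j}(M)$ and hits $y$. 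For injectivity, suppose $x\in\Ker\theta^{j+1}\cap\im\theta^{i-j-1}$ with $\theta^jx\in\Ker\theta\cap\im\theta^i$, say $\theta^jx=\theta^iw$, and set $x'=x-\theta^{i-j}w$. Then $\theta^jx'=0$, and $x'\in\im\theta^{i-j-1}$ since both $x$ and $\theta^{i-j}w=\theta^{i-j-1}(\theta w)$ are, so $x'\in\Ker\theta^j\cap\im\theta^{i-j-1}$; moreover $\theta^{i-j}w\in\im\theta^{i-j}$ and $\theta^{j+1}(\theta^{i-j}w)=\theta(\theta^jx)=\theta^{j+1}x=0$, so $\theta^{i-j}w\in\Ker\theta^{j+1}\cap\im\theta^{i-j}$. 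Thus $x=x'+\theta^{i-j}w$ lies in the denominator of $\cF_{i,j}(M)$, which is exactly the injectivity of $\overline{\theta^j}$.

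The step I expect to require the most care is the bookkeeping of Serre twists underlying the first paragraph: one must confirm that, under $\theta^j\colon\wt M\to\wt M(j)$, the subsheaf $\Ker\theta^{j+1}\cap\im\theta^{i-j-1}$ of $\wt M$ truly lands in $(\Ker\theta\cap\im\theta^{i-1})(j)$, and similarly for the denominators, so that the target of $\overline{\theta^j}$ is genuinely $\cF_i(M)(j)$ rather than some twist-ambiguous variant. Because twisting by $\cO(j)$ is exact this reduces to the identifications $\Ker(\theta(j))=(\Ker\theta)(j)$ and $\im(\theta^{i-1}(j))=(\im\theta^{i-1})(j)$ inside $\wt M(j)$, which are immediate. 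With the twists pinned down and the stalkwise reduction in hand, the formal computations above go through verbatim and produce the desired natural isomorphism $\cF_{i,j}(M)\cong\cF_i(M)(j)$.
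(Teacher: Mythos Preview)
Your proof is correct and follows essentially the same approach as the paper: the paper's proof also uses $\theta_M$ to build the isomorphism, phrasing it as an induction on $j$ via the single-step isomorphisms $\cF_{i,j}(M)\to\cF_{i,j-1}(M)(1)$ induced by $\theta_M$, whereas you compose these steps and work directly with $\theta_M^j$. Your explicit stalkwise verification of surjectivity and injectivity fleshes out what the paper leaves implicit, but the underlying idea is identical.
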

\begin{proof}
For $0<j<i$, the map 
$\theta_M\colon\wt M\to\wt M(1)$ induces a natural isomorphism
$\cF_{i,j}(M)\to\cF_{i,j-1}(M)(1)$. Since $\cF_{i,0}=\cF_i$, the result follows
by induction on $j$.
\end{proof}

\begin{remark}
It follows from the proof of Proposition~\ref{p:bundles} that
the subquotient functors $\cF_{i,j}$ are linked as follows:
\[ \xymatrix@=2mm{\cF_{p,p-1} \ar@{-}[dr] \\ 
& \cF_{p-1,p-2} \ar@{-}[dr] \\
\cF_{p,p-2} \ar@{-}[ur] \ar@{-}[dr] & & \cF_{p-2,p-3} \ar@{-}[dr] \\
& \cF_{p-1,p-3} \ar@{-}[ur] \ar@{-}[dr] & & \ \dots\  \ar@{-}[dr] \\
\dots\ar@{-}[ur]\ar@{-}[dr]&\dots&\dots\ar@{-}[ur]\ar@{-}[dr]&\dots& 
\ \ \cF_{1,0}\ \  \\
& \cF_{p-1,1} \ar@{-}[dr] \ar@{-}[ur] & & \ \dots\  \ar@{-}[ur] \\
\cF_{p,1} \ar@{-}[dr] \ar@{-}[ur] & & \cF_{p-2,0} \ar@{-}[ur]  \\
& \cF_{p-1,0}  \ar@{-}[ur] \\
\cF_{p,0} \ar@{-}[ur]} \]
\end{remark}

We finish this section with an example.  
\begin{eg}\label{eg:tangent}
Let $M=kE/J^2(kE)$. Then $M$ has constant Jordan type $[2][1]^{r-1}$.
In the short exact sequence of vector bundles
\[ 0 \to \widetilde{M/\Rad M} \xrightarrow{\theta} 
\widetilde{\Rad M}(1) \to \cF_1(M)(1) 
\to 0 \] 
the map $\theta$ (induced by $\theta_M$) 
is equal to the map defining the tangent bundle (or sheaf of 
derivations) $\cT$ of $\bP^{r-1}$:
\[ 0 \to \cO \to \cO(1)^r \to \cT \to 0.\]
It follows that $\cF_1(M)\cong \cT(-1)$. 
On the other hand we have
$\cF_{2,1}(M) \cong \cO$, and hence $\cF_2(M) \cong \cO(-1)$.
\end{eg}

%%%%%%%%%%%%%%%%%%%%%%%%SECTION 3%%%%%%%%%%%%%%%%%%%%%%%%%

\section{Twists and syzygies}\label{se:prop}
We need a general lemma whose proof we provide for completeness.
\begin{lemma}
\label{l:crit}  Let $X$ be a Noetherian scheme over $k$, and  let
$M, N$ be  locally free $\cO_X$-modules. Let $f\colon M \to N$ be a morphism of 
$\cO_X$-modules such that  
\[ f \otimes_{\cO_X} k(x)\colon M \otimes_{\cO_X} k(x) \to 
N \otimes_{\cO_X} k(x) \] 
is an isomorphism for any $x \in X$.  Then $f$ is an isomorphism.  
\end{lemma}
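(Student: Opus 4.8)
The plan is to verify that $f$ is an isomorphism stalk by stalk, which suffices since a morphism of $\cO_X$-modules is an isomorphism if and only if it induces an isomorphism on every stalk. So fix $x\in X$ and pass to the local ring $A=\cO_{X,x}$, with maximal ideal $\mathfrak m$ and residue field $k(x)=A/\mathfrak m$. Since $M$ and $N$ are locally free of finite rank, $M_x$ and $N_x$ are free $A$-modules of finite rank, and by hypothesis the reduction $\bar f = f_x\otimes_A k(x)\colon M_x\otimes_A k(x)\to N_x\otimes_A k(x)$ is an isomorphism; in particular it is surjective.

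First I would show $f_x$ is surjective. Let $C=\coker f_x$, which is a finitely generated $A$-module because $N_x$ is finitely generated and $A$ is Noetherian. Right-exactness of $-\otimes_A k(x)$ gives $C\otimes_A k(x)=\coker\bar f=0$, i.e. $C=\mathfrak m C$, so $C=0$ by Nakayama's lemma. Hence $f_x$ is surjective.

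Next consider the short exact sequence $0\to K\to M_x\xrightarrow{f_x} N_x\to 0$. Since $N_x$ is free, this sequence splits, so $M_x\cong K\oplus N_x$; in particular $K$ is a direct summand of the finite free module $M_x$, hence itself finitely generated and projective, and therefore free of finite rank since $A$ is local. Tensoring the split sequence with $k(x)$ gives a short exact sequence $0\to K\otimes_A k(x)\to M_x\otimes_A k(x)\xrightarrow{\bar f} N_x\otimes_A k(x)\to 0$, and as $\bar f$ is an isomorphism we conclude $K\otimes_A k(x)=0$. A finite free module whose reduction mod $\mathfrak m$ vanishes is zero, so $K=0$ and $f_x$ is an isomorphism. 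As $x$ was arbitrary, $f$ is an isomorphism.

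This argument uses no serious input beyond Nakayama's lemma and the splitting of short exact sequences onto a projective module, so there is no genuine obstacle. The only points needing a little care are the reduction to stalks (legitimate because isomorphy of sheaf morphisms is a local condition) and the implicit convention that "locally free" means locally free of \emph{finite} rank, which is what makes $\coker f_x$ finitely generated over the Noetherian ring $A$ so that Nakayama applies and makes the final rank count meaningful. Note that the hypothesis that $X$ is a $k$-scheme plays no role.
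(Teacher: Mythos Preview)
Your proof is correct and follows essentially the same route as the paper: reduce to stalks, use Nakayama's lemma on the cokernel to get surjectivity, then exploit freeness of $N_x$ to see that $\ker f_x\otimes_A k(x)=0$ and conclude by Nakayama again. The only cosmetic difference is that the paper phrases the kernel step via the vanishing of $\Tor_1^R(N,R/\mathfrak m)$ rather than via the splitting of the sequence, but these are equivalent observations.
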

\begin{proof}
It suffices to show that $f$ induces an isomorphism on stalks. 
Hence, we may assume that $X = \Spec R$, where $R$ is a local ring 
with the maximal ideal $\mathfrak m$, and $M$, $N$ are free modules.  
Since specialization is right exact, $f$ is surjective by Nakayama's 
lemma. Hence, we have an exact sequence of $R$-modules: 
\[ 0\to \ker f \to M \to N \to 0. \] 
Since $N$ is free, 
$\Tor_1^R(N, R/\mathfrak m)$ vanishes, and hence 
$\ker f \otimes_R  R/\mathfrak m =0$. By Naka\-yama's lemma, 
$\ker f=0$; therefore, $f$ is injective.
\end{proof} 
\begin{theorem}\label{th:Omega}
Let $M$ be a finite dimensional $kE$-module and let $1\le i\le p-1$.
Then there is a natural isomorphism 
$$
\cF_i(M)(-p+i) \cong \cF_{p-i}(\Omega M).
$$
\end{theorem}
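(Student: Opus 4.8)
The plan is to realize both sides as subquotients of a single standard resolution-theoretic object and compare them via the $\theta$-maps. Recall that for a $kE$-module $M$, a projective cover $P \twoheadrightarrow M$ gives $\Omega M = \ker(P \to M)$, and on the level of trivial bundles with the $\theta$ operators we have an exact sequence $0 \to \widetilde{\Omega M} \to \widetilde P \xrightarrow{\epsilon} \widetilde M \to 0$ compatible with the $\theta$-maps (since $\epsilon$ is $kE$-linear, hence commutes with each $X_i$). Because $P$ is free, $\theta_P$ is injective with $\theta_P^p = 0$ and $\im \theta_P^{p-1} = \ker \theta_P$; more precisely $\widetilde P$ with $\theta_P$ looks fiberwise like a sum of single Jordan blocks of length $p$ (after a Serre twist bookkeeping), so all its $\cF_{i,j}(P)$ vanish except the ones coming from the full-length block. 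The first step is therefore to pin down exactly how $\Ker\theta_P^a \cap \im\theta_P^b$ sits inside $\widetilde P$ and which Serre twists appear.

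Next I would run the snake/kernel-image diagram chase on the short exact sequence $0 \to \widetilde{\Omega M} \to \widetilde P \to \widetilde M \to 0$. Applying the subquotient construction, the key identity to extract is that $\Ker \theta_M^{j+1} \cap \im \theta_M^{i-j-1}$ inside $\widetilde M$ is the image under $\epsilon$ of the corresponding piece in $\widetilde P$, while its preimage intersected with $\widetilde{\Omega M}$ is governed by $\Ker \theta_{\Omega M}^{\,?} \cap \im \theta_{\Omega M}^{\,?}$ with indices shifted by the "complementary" amounts $p-i$ and appropriate twists. Concretely, a block of length $i$ in $M$ at a point $\bar\alpha$ corresponds, via $\epsilon$ restricted to a length-$p$ block of $P$, to a block of length $p-i$ in $\Omega M$ — this is the Jordan-type duality between $M$ and $\Omega M$ already implicit in Proposition~\ref{p:bundles} and the linkage remark. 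The bookkeeping of Serre twists: climbing from $\cF_{p-i}(\Omega M) = \cF_{p-i,0}(\Omega M)$ up to the piece that matches $\cF_i(M)$ costs a twist by $i-1$ going one way and the ambient block of length $p$ contributes a global twist, and after using Lemma~\ref{le:Fij} ($\cF_{i,j} \cong \cF_i(j)$) the net shift works out to $-p+i$. I would make this precise by identifying $\cF_i(M)$, up to twist, with $\cF_{p, p-i}(?)$-type data of the block and then transporting through $\epsilon$.

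A cleaner route, which I would try first, is to use the long exact sequence already in hand: from $0 \to \widetilde{\Omega M} \to \widetilde P \to \widetilde M \to 0$ one gets, for each relevant pair of exponents, connecting maps relating $\Ker\theta_M \cap \im\theta_M^{i-1}$ to a subquotient of $\widetilde{\Omega M}$. Since $\widetilde P$ is "$\theta$-free," its contribution to the relevant subquotients is either zero or a single known twist of $\cO$-free sheaves, so the connecting maps become isomorphisms onto the pieces $\cF_{p-i,*}(\Omega M)$. Then Lemma~\ref{le:Fij} converts the shifted subquotient $\cF_{p-i, *}(\Omega M)$ to $\cF_{p-i}(\Omega M)(*)$, and matching the twist on the two sides gives the factor $(-p+i)$. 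Naturality is automatic because every map used ($\epsilon$, the $\theta$'s, the connecting homomorphisms) is natural in $M$.

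The main obstacle I anticipate is the twist bookkeeping: getting the single shift $-p+i$ correct requires carefully tracking how many times $\theta_M$ (which raises the Serre twist by $1$) is applied in identifying $\cF_i(M)$ with a subquotient of a length-$p$ block of $\widetilde P$, versus how many applications identify $\cF_{p-i}(\Omega M)$ with the complementary subquotient. A sign-of-index error here is easy to make, so I would sanity-check against Example~\ref{eg:tangent}: there $M = kE/J^2$ has $\cF_1(M) = \cT(-1)$, $\Omega M$ should have $\cF_{p-1}(\Omega M)$ computable directly (for $p=2$, $\Omega M$ has stable constant Jordan type $[1]^{r-1}$ and the formula reads $\cF_1(M)(-1) \cong \cF_1(\Omega M)$, i.e. $\cT(-2) \cong \cF_1(\Omega M)$), and verify the exponents match. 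A secondary technical point is ensuring the "$\theta$-free" structure of $\widetilde P$ is used correctly fiberwise — but this follows from Proposition~\ref{p:bundles} applied to the free module $P$ (which has constant Jordan type $[p]^{\dim_k M}$), so $\cF_i(P) = 0$ for $i < p$ and $\cF_p(P)$ is a known twist of a free sheaf, killing all the "error terms" in the diagram chase.
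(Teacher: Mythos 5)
Your plan coincides with the paper's proof: both apply the connecting (switchback) map of the $\theta$-diagram built on $0 \to \widetilde{\Omega M} \to \widetilde{P_M} \to \widetilde M \to 0$, show it induces a well-defined map $\cF_i(M) \to \cF_{p-i,p-i-1}(\Omega M)(1)$ which a fiberwise block count proves to be an isomorphism via Lemma~\ref{l:crit}, and then convert to $\cF_{p-i}(\Omega M)(p-i)$ using Lemma~\ref{le:Fij} and twist by $-p+i$. One small slip: $\theta_P$ is \emph{not} injective (for $P$ free, $\Ker\theta_P = \im\theta_P^{p-1} \neq 0$), but what you actually rely on --- the vanishing $\cF_i(P)=0$ for $i<p$ --- is correct, so the argument is unaffected.
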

\begin{proof}
Consider the diagram
\begin{equation}\label{eq:switchback} 
\xymatrix{0 \ar[r] & \widetilde{\Omega M} \ar[r] \ar[d]^{\theta_{\Omega M}} 
& \widetilde{P_M} \ar[r] \ar[d]^{\theta_{P_M}} &
\wt M \ar[r] \ar[d]^{\theta_M} & 0 \\
0 \ar[r] & \widetilde{\Omega M}(1) \ar[r] & \widetilde{P_M}(1) \ar[r] & 
\wt M(1) \ar[r] & 0, } 
\end{equation}
where $P_M$ is a projective cover of $M$. Let 
$$
\delta\colon \Ker \theta_M \to \coker \theta_{\Omega M}
$$ 
be the switchback map.  A simple diagram chase in conjunction 
with the fact that $\theta_{P_M}^p=0$ yields that 
the restriction of $\delta$ to $\Ker \theta_M\cap \im \theta^{i-1}_M$ 
lands in 
\[ \frac{\Ker \theta^{p-i}_{\Omega M}}
{\Ker \theta^{p-i}_{\Omega M} \cap \im \theta_{\Omega(M)}}(1). \] 
Projecting the latter onto 
\[ \cF_{p-i,p-i-1}(\Omega M)(1)=\frac{\Ker \theta^{p-i}_{\Omega M}}
{\Ker \theta^{p-i-1}_{\Omega M} + \Ker \theta^{p-i}_{\Omega M} 
\cap \im \theta_{\Omega(M)}}(1), \] 
we get a map of bundles:
\[ \delta\colon \Ker \theta_M\cap \im \theta^{i-1} 
\to \cF_{p-i,p-i-1}(\Omega M)(1). \]
Since $\delta$ evidently kills $\Ker \theta_M\cap \im \theta^{i}_M$, 
we conclude that $\delta$ factors through $\cF_i(M)$. Hence, 
we have an induced map
\[ \delta\colon \cF_i(M) \to \cF_{p-i,p-i-1}(\Omega M)(1). \]
A simple block count shows that this is an isomorphism at each fiber.  
Hence, by Lemma~\ref{l:crit}, this is an isomorphism of bundles.
Thus using Lemma \ref{le:Fij} (i.e., applying $\theta_{\Omega M}$ a further
$p-i-1$ times), we have
\begin{equation*}
\cF_i(M) \cong \cF_{p-i,p-i-1}(\Omega M)(1) \cong \cF_{p-i}(\Omega M)(p-i).
\end{equation*} Twisting by $\cO(-p+i)$, we get the desired isomorphism. 

Let $f\colon M \to N$ be a map of $kE$-modules. The naturality 
of the isomorphism $\cF_i(M)(-p+i)  \cong \cF_{p-i}(\Omega M)$ is 
equivalent to the commutativity of the diagram
\begin{equation}\label{eq:nat1}
\vcenter{\xymatrix
{\cF_i(M)(-p+i) \ar[d]^{\simeq}\ar[rr]^-{\cF_{i}(f)(-p+i)} 
&&\cF_i(N)(-p+i)\ar[d]^{\simeq}\\
\cF_{p-i}(\Omega M) \ar[rr]^-{\cF_{p-i}(\Omega f)}&& \cF_{p-i}(\Omega N).}}
\end{equation}
The commutativity follows from the construction of the map $\delta$ 
and naturality of the ``shifting  isomorphism"  of Lemma~\ref{le:Fij}.
\end{proof}

\begin{cor}\label{co:Omega2}
Let $M$ be a finite dimensional $kE$-module and let $1\le i\le p-1$.
Then $\cF_i(\Omega^2 M)\cong \cF_i(M)(-p)$.
\end{cor}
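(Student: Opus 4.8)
Corollary \ref{co:Omega2} asks for $\cF_i(\Omega^2 M)\cong\cF_i(M)(-p)$ when $1\le i\le p-1$. The obvious route is to apply Theorem \ref{th:Omega} twice. Applying it to $M$ gives $\cF_i(M)(-p+i)\cong\cF_{p-i}(\Omega M)$. Now I would like to apply it again, this time to $\Omega M$ in the role of ``$M$'', with index $p-i$ in the role of ``$i$''. Since $1\le i\le p-1$ forces $1\le p-i\le p-1$, the hypothesis of Theorem \ref{th:Omega} is satisfied, and it yields
\[
\cF_{p-i}(\Omega M)(-p+(p-i))=\cF_{p-i}(\Omega M)(-i)\;\cong\;\cF_{p-(p-i)}(\Omega(\Omega M))=\cF_i(\Omega^2 M).
\]
Combining the two isomorphisms: from the first, $\cF_{p-i}(\Omega M)\cong\cF_i(M)(-p+i)$; substituting into the second (after twisting the first by $\cO(-i)$) gives $\cF_i(\Omega^2 M)\cong\cF_i(M)(-p+i)(-i)=\cF_i(M)(-p)$, which is exactly the claim.

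\textbf{Bookkeeping and naturality.} The only real care needed is tracking the Serre twists, since Theorem \ref{th:Omega} is stated in the asymmetric form $\cF_i(M)(-p+i)\cong\cF_{p-i}(\Omega M)$ rather than symmetrically. I would lay the two instances out as a short chain of isomorphisms, twisting each by the appropriate power of $\cO$ so the composite is visibly $\cF_i(\Omega^2 M)\cong\cF_i(M)(-p)$; this is a one-line computation with the exponents $-p+i$ and $-p+(p-i)=-i$ adding to $-p$. For functoriality, given $f\colon M\to N$, the square
\[
\xymatrix{
\cF_i(\Omega^2 M)\ar[d]\ar[r]^{\cF_i(\Omega^2 f)} & \cF_i(\Omega^2 N)\ar[d]\\
\cF_i(M)(-p)\ar[r]^{\cF_i(f)(-p)} & \cF_i(N)(-p)
}
\]
commutes because it is obtained by splicing together two instances of the naturality diagram \eqref{eq:nat1} from Theorem \ref{th:Omega} (one for $f$, one for $\Omega f$), together with the fact that Serre twisting $(-)(j)$ is itself a functor and hence preserves commutative squares. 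So naturality is inherited for free.

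\textbf{Where the difficulty lies.} Honestly there is no serious obstacle: the corollary is a formal consequence of Theorem \ref{th:Omega} applied twice, and the proof is essentially just ``$\Omega^2$ of anything sends index $i$ to $p-i$ to $i$ again, and the twists add up to $-p$.'' The only place one could slip is in forgetting that the hypothesis $1\le i\le p-1$ of Theorem \ref{th:Omega} must be rechecked for the second application with index $p-i$ — but as noted, $i\in\{1,\dots,p-1\}$ is symmetric under $i\mapsto p-i$, so this is automatic. Thus the entire proof is: cite Theorem \ref{th:Omega} for $M$ and index $i$, cite it again for $\Omega M$ and index $p-i$, compose, and collect the twists.
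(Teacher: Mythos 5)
Your argument is exactly the paper's proof: the paper says simply ``Apply the theorem twice,'' and you have spelled out the bookkeeping (indices $i \mapsto p-i \mapsto i$, twists $(-p+i)+(-i)=-p$, and the rechecked hypothesis for the second application) correctly. Nothing to add.
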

\begin{proof}
Apply the theorem twice.
\end{proof}

\begin{cor}\label{co:Omegank}
We have $\cF_1(\Omega^{2n}k)\cong\cO(-np)$, 
and $\cF_{p-1}(\Omega^{2n-1}k)\cong\cO(1-np)$.
\end{cor}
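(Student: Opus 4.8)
The idea is to start from the base cases computed in Example \ref{eg:tangent} (or from simpler considerations) and then iterate the syzygy relations established in Theorem \ref{th:Omega} and Corollary \ref{co:Omega2}. The two statements concern $\cF_1$ of even syzygies of $k$ and $\cF_{p-1}$ of odd syzygies of $k$, so I expect them to be linked by a single application of Theorem \ref{th:Omega}.

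First I would pin down the base cases. For $n=0$ the first claim reads $\cF_1(k)\cong\cO$, which is immediate: the trivial module $k$ has constant Jordan type $[1]$, and $\theta_k=0$, so $\cF_1(k)=\wt k=\cO$. For the second claim the relevant base case ($n=1$) is $\cF_{p-1}(\Omega k)\cong\cO(1-p)$; this should follow by applying Theorem \ref{th:Omega} with $M=k$ and $i=1$, giving $\cF_1(k)(-p+1)\cong\cF_{p-1}(\Omega k)$, and then substituting $\cF_1(k)\cong\cO$. (Alternatively one can get $\cF_{p-1}(\Omega k)$ directly: $\Omega k$ is the augmentation ideal $J(kE)$ up to a free summand and a short computation identifies the relevant subquotient, but routing through Theorem \ref{th:Omega} is cleaner.)

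Next I would run the induction. Corollary \ref{co:Omega2} gives $\cF_1(\Omega^{2n}k)\cong\cF_1(\Omega^{2n-2}k)(-p)$, so by induction on $n$ starting from $\cF_1(k)\cong\cO$ we get $\cF_1(\Omega^{2n}k)\cong\cO(-np)$. For the odd case, apply $\cF_i$-invariance under $\Omega^2$ (Corollary \ref{co:Omega2}, now with $i=p-1$ and the module $\Omega k$): $\cF_{p-1}(\Omega^{2n-1}k)=\cF_{p-1}(\Omega^{2(n-1)}(\Omega k))\cong\cF_{p-1}(\Omega k)(-(n-1)p)\cong\cO(1-p)(-(n-1)p)=\cO(1-np)$, using the base case from the previous paragraph. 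That closes both statements.

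There is no real obstacle here — the corollary is a formal consequence of Theorem \ref{th:Omega} and Corollary \ref{co:Omega2} once the base cases are in hand. The only point requiring a moment's care is making sure the Serre twists compose correctly (i.e., $\cO(a)(b)\cong\cO(a+b)$, so the exponents add as claimed) and that $1\le i\le p-1$ is respected throughout — for $p=2$ one has $i=p-1=1$, so the two families coincide appropriately and the statements remain consistent.
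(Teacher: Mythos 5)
Your argument is correct and is exactly the paper's intended route: the paper's one-line proof (``This follows from the theorem and the corollary, using the isomorphism $\cF_1(k)\cong\cO$'') is just a compressed version of what you spell out. Your base cases and the induction via Theorem~\ref{th:Omega} and Corollary~\ref{co:Omega2} are all sound, including the twist arithmetic.
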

\begin{proof}
This follows from the theorem and the corollary,
using the isomorphism $\cF_1(k)\cong\cO$.
\end{proof}

\begin{remark}
\label{re:p2}
If $p=2$ then Theorem \ref{th:Omega} and 
Corollary \ref{co:Omegank} reduce to the statements 
that $\cF_1(\Omega M) \cong \cF_1(M)(-1)$ and $\cF_1(\Omega^nk) \cong \cO(-n)$.
\end{remark}

For a coherent sheaf $\cE$, we denote by $\cE^\vee  = \mathcal Hom_\cO(\cE, \cO)$ the dual sheaf. 
\begin{theorem}
Let $M^*$ be the $k$-linear dual of $M$, as a $kE$-module. Then
\[ \cF_i(M^*)\cong \cF_i(M)^\vee(-i+1). \]
\end{theorem}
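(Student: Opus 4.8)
The plan is to relate the $\theta$-operator on $\widetilde{M^*}$ to the transpose of the $\theta$-operator on $\wt M$, and then chase through the defining subquotients. Concretely, fix a point $\bar\alpha\in\bP^{r-1}$: the operator $X_\alpha$ on $M$ dualizes to the operator $X_\alpha$ on $M^*$ (up to the sign $(-1)$ built into how $g_i-1$ dualizes, which is harmless since we only care about powers and kernels/images), so $X_\alpha$ acting on $M^*$ is, in a suitable basis, the transpose of $X_\alpha$ acting on $M$. For a single nilpotent operator $N$ on a finite-dimensional space, there is a canonical (basis-free, once a nondegenerate pairing is chosen) identification
\[ \frac{\Ker N^{\mathrm{t}}\cap\im (N^{\mathrm t})^{i-1}}{\Ker N^{\mathrm t}\cap\im (N^{\mathrm t})^i}\;\cong\;\Bigl(\frac{\Ker N\cap\im N^{i-1}}{\Ker N\cap\im N^i}\Bigr)^{\!*}, \]
because under the pairing $\im N^{i-1}$ and $\Ker (N^{\mathrm t})$ are mutual annihilators' complements in the right way — more precisely $\Ker(N^{\mathrm t})=(\im N)^{\perp}$ and $\im((N^{\mathrm t})^{i-1})=(\Ker N^{i-1})^{\perp}$, and a short linear-algebra computation (or a direct count on a single Jordan block) shows the $i$-th socle layer of $N^{\mathrm t}$ is dual to the $i$-th socle layer of $N$. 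So fiberwise the asserted isomorphism holds, with the $(-i+1)$ twist accounted for by the Serre-twist bookkeeping of $\cF_{i,j}$ versus $\cF_i$.

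To promote this to an isomorphism of sheaves, I would first make the duality statement globally on $\bP^{r-1}$ rather than pointwise. The key observation is that $\theta_{M^*}\colon\widetilde{M^*}\to\widetilde{M^*}(1)$ is, canonically, the $\cO(1)$-twisted $\cO$-dual of $\theta_M$: identifying $\widetilde{M^*}=(\wt M)^\vee$, one has $\theta_{M^*}=(\theta_M)^\vee(1)$ up to a global sign. From this, $\Ker\theta_{M^*}^{\,a}$ and $\im\theta_{M^*}^{\,a}$ are expressible via the corresponding kernels and images for $\theta_M$: namely $\im(\theta_{M^*}^{\,a})=\bigl(\widetilde{M}/\Ker\theta_M^{\,a}\bigr)^\vee(a)$ and $\Ker(\theta_{M^*}^{\,a})=\bigl(\widetilde M/\im\theta_M^{\,a}\bigr)^\vee(a)$, at least after checking that the relevant cokernels are locally free — which is exactly what Proposition~\ref{p:bundles} and \cite[4.13]{Friedlander/Pevtsova:constr} give when $M$ has constant Jordan type, and can be reduced to that case (or handled by taking the constant-Jordan-type statement as the real content). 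Plugging these into the definition of $\cF_{i,j}(M^*)$ and simplifying with the second isomorphism theorem and the modular law — exactly the formal manipulations already used in the filtration lemma — should produce a natural isomorphism $\cF_{i,j}(M^*)\cong\cF_{i,i-1-j}(M)^\vee(\text{twist})$, and then specializing $j$ and using Lemma~\ref{le:Fij} collapses this to $\cF_i(M^*)\cong\cF_i(M)^\vee(-i+1)$.

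Alternatively — and this may be cleaner — I would build the map directly: the nondegenerate pairing $\wt M\otimes_\cO\widetilde{M^*}\to\cO$ together with the identity $\langle\theta_M x,y\rangle=\pm\langle x,\theta_{M^*}y\rangle$ restricts to a pairing between $\Ker\theta_M^{\,j+1}\cap\im\theta_M^{\,i-j-1}$ and the analogous subspace for $M^*$ with the complementary indices, which descends to the subquotients and is fiberwise perfect by the single-operator computation above; then Lemma~\ref{l:crit} (applied to the induced map $\cF_{i,j}(M^*)\to\cF_i(M)^\vee(\text{twist})$, both locally free by Proposition~\ref{p:bundles}) upgrades a fiberwise isomorphism to a global one. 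Naturality in $M$ is immediate because the pairing and the sign identity are natural. The main obstacle I anticipate is purely bookkeeping: getting the Serre twists and the index shift to land exactly on $-i+1$, and being careful that all the intermediate cokernels ($\widetilde M/\Ker\theta_M^{\,a}$ etc.) are locally free so that dualizing is exact and commutes with restriction to fibers — both points are controlled by the constant-Jordan-type hypothesis via \cite[4.10, 4.13]{Friedlander/Pevtsova:constr}, so the signs and twists are really the only thing that needs care.
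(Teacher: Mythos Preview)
Your proposal is correct and follows essentially the same route as the paper: the paper's entire proof is the single observation that $\cF_{i,i-1}(M^*)\cong\cF_{i,0}(M)^\vee$ (the ``more obvious'' duality at the top index, which is the $j=i-1$ case of your $\cF_{i,j}(M^*)\cong\cF_{i,i-1-j}(M)^\vee$), combined with Lemma~\ref{le:Fij} to produce the twist $-i+1$. You have simply unpacked in detail what the paper leaves as ``more obvious,'' including the fiberwise check via Lemma~\ref{l:crit} and the local-freeness bookkeeping.
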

\begin{proof}
This follows from the more obvious isomorphism 
$\cF_{i,i-1}(M^*)\cong\cF_{i,0}(M)^\vee$
together with Lemma \ref{le:Fij}.
\end{proof}

We finish this section with exactness properties of the functors $\cF_i$ 
which will be essential in the proof of the main theorem. 

Let $\cC(kE)$ be the {\it exact category of modules of constant Jordan type} 
as introduced in \cite{Carlson/Friedlander:2009a}. 
This is an exact category in the sense of Quillen: 
the objects are finite dimensional $kE$-modules of constant Jordan type, 
and the admissible morphisms are morphisms which can be completed to a 
locally split short exact sequence.  We call a sequence  of $kE$-modules 
\[ \xymatrix{0\ar[r]& M_1 \ar[r]& M_2 \ar[r]& M_3 \ar[r]& 0} \]
{\it locally split} if it is split upon restriction to 
$k[X_\alpha]/X_\alpha^p$ for any $ 0 \not = \alpha \in \bA^r$.    
\begin{prop}
\label{exact}
The functor $\cF_i\colon \cC(kE) \to \Coh(\bP_k^{r-1})$ is exact for %any $i$,  
$1 \leq i \leq p-1$.
\end{prop}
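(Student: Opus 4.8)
The plan is to reduce the exactness of $\cF_i$ on a locally split short exact sequence $0\to M_1\to M_2\to M_3\to 0$ in $\cC(kE)$ to a fibrewise computation, using Proposition~\ref{p:bundles}(2) together with Lemma~\ref{l:crit}. First I would observe that since all three modules have constant Jordan type, Proposition~\ref{p:bundles}(1) guarantees that $\cF_i(M_1),\cF_i(M_2),\cF_i(M_3)$ are locally free, so it suffices to check that
\[ 0\to \cF_i(M_1)\to\cF_i(M_2)\to\cF_i(M_3)\to 0 \]
is exact, and for a complex of locally free sheaves on a Noetherian scheme this can be verified after specialization at every point $\bar\alpha\in\bP^{r-1}$ — here one uses that a complex of vector bundles which becomes exact on all fibres is exact (exactness on fibres forces the cokernel of each map to be locally free of the expected rank and the connecting maps to vanish; alternatively argue stalk-locally as in Lemma~\ref{l:crit}). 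By the commutative diagram of Proposition~\ref{p:bundles}(2), the specialized sequence is identified with
\[ 0\to \cF_{i,\alpha}(M_1)\to\cF_{i,\alpha}(M_2)\to\cF_{i,\alpha}(M_3)\to 0, \]
so the whole problem is local: it reduces to showing that for each fixed $0\ne\alpha\in\bA^r$, the functor $M\mapsto \cF_{i,\alpha}(M)=(\Ker X_\alpha\cap\im X_\alpha^{i-1})/(\Ker X_\alpha\cap\im X_\alpha^{i})$ sends a short exact sequence of $k[X_\alpha]/X_\alpha^p$-modules that is \emph{split} to a short exact sequence.

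This last statement is then essentially a statement about the representation theory of the truncated polynomial ring $A=k[t]/t^p$: if $0\to N_1\to N_2\to N_3\to 0$ splits, then $N_2\cong N_1\oplus N_3$ as $A$-modules, and I would check that $\cF_{i,\alpha}$ is additive on direct sums — which is immediate, since $\Ker$, $\im$, and the operations of intersection all commute with finite direct sums — so $\cF_{i,\alpha}(N_2)\cong\cF_{i,\alpha}(N_1)\oplus\cF_{i,\alpha}(N_3)$, and the maps induced by the split inclusion and projection realize this as the short exact sequence
\[ 0\to\cF_{i,\alpha}(N_1)\to\cF_{i,\alpha}(N_2)\to\cF_{i,\alpha}(N_3)\to 0. \]
Equivalently, on indecomposable $A$-modules (Jordan blocks $[n]$ for $1\le n\le p$) one has $\cF_{i,\alpha}([n])$ one-dimensional if $n=i$ and zero otherwise, so $\dim\cF_{i,\alpha}(N)$ is just the number of length-$i$ blocks, which is additive across a split sequence; combined with functoriality this gives exactness. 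I should be slightly careful that the maps $\cF_{i,\alpha}(N_1)\to\cF_{i,\alpha}(N_2)$ and $\cF_{i,\alpha}(N_2)\to\cF_{i,\alpha}(N_3)$ appearing here are genuinely the ones induced by the original (non-split) maps in $\cC(kE)$ rather than by a chosen splitting, but a short argument shows they agree up to the identification $N_2\cong N_1\oplus N_3$, since the induced maps on subquotients depend only on the underlying module maps.

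The main obstacle I anticipate is purely the bookkeeping in the first paragraph: verifying cleanly that a complex of vector bundles exact on all fibres is exact as a complex of sheaves (one wants injectivity on the left and surjectivity on the right, plus equality of image and kernel in the middle), and matching the specialized maps with $\cF_{i,\alpha}$ of the module maps via Proposition~\ref{p:bundles}(2). Once the problem is localized at a point, the $k[t]/t^p$-module computation is routine and the additivity argument is formal. The restriction $1\le i\le p-1$ enters exactly as in Proposition~\ref{p:bundles}: for $i=p$ the relevant $\im\theta^{p}$ vanishes and the bundle/fibre dimension count still works, but the cleanest uniform statement — and the one needed later — is for $i\le p-1$, and local splitness is precisely what is needed for the fibrewise sequences to be exact (an arbitrary short exact sequence of modules of constant Jordan type need not give exact fibres).
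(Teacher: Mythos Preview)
Your proposal is correct and follows essentially the same approach as the paper: reduce to fibres via Proposition~\ref{p:bundles}(2), use local splitness to get exactness of $0\to\cF_{i,\alpha}(M_1)\to\cF_{i,\alpha}(M_2)\to\cF_{i,\alpha}(M_3)\to 0$ at each $\bar\alpha$, and then lift to global exactness via the Lemma~\ref{l:crit} argument. The only organizational difference is that the paper avoids invoking a general ``exact on all fibres implies exact'' principle by handling the three pieces separately --- fibrewise surjectivity of $\cF_i(M_2)\to\cF_i(M_3)$ gives global surjectivity by Nakayama, fibrewise injectivity of $\cF_i(M_1)\to\cF_i(M_2)$ gives global injectivity, and then the rank identity $\rk\cF_i(M_2)=\rk\cF_i(M_1)+\rk\cF_i(M_3)$ forces exactness in the middle --- which sidesteps the bookkeeping you flagged as the main obstacle.
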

\begin{proof}   
Let 
\[ \xymatrix{0\ar[r]& M_1 \ar[r]& M_2 \ar[r]& M_3 \ar[r]& 0} \] 
be a locally split  short exact sequence of  modules of constant Jordan type.   
Then the Jordan type of the middle term is the sum of Jordan types of the 
end terms. Hence, $\rk \cF_i(M_2) = \rk \cF_i(M_1) + \rk \cF_i(M_3)$ for any 
$i$. 
Consider  the map $\cF_i(M_2) \to \cF_i(M_3)$. By Proposition~\ref{p:bundles}, 
the specialization $\cF_i(M_2) \otimes_\cO k(\bar \alpha) \to \cF_i(M_3) 
\otimes_\cO k(\bar \alpha)$ is surjective at any point 
$\bar \alpha \in \bP^{r-1}$. Arguing as in Lemma~\ref{l:crit}, we conclude 
that $\cF_i(M_2) \to \cF_i(M_3)$ is surjective.  Similarly, we show that 
$\cF_i(M_1) \to \cF_i(M_2)$
is injective.   Finally, the equality $\rk \cF_i(M_2) = \rk \cF_i(M_1) + 
\rk \cF_i(M_3)$ implies  exactness in the middle term. 
\end{proof}

%%%%%%%%%%%%%%%%%%%%%%%%SECTION 5%%%%%%%%%%%%%%%%%%%%%%%

\section{The construction}
\label{se:main}
Since $H^1(E,k)$ is the vector space dual of 
$J(kE)/J^2(kE)$, there are elements 
$y_1,\dots,y_r$ forming a vector space basis for $H^1(E,k)$
and corresponding to the linear functions $Y_1,\dots,Y_r$ on $J(kE)/J^2(kE)$
introduced in Section \ref{se:setup}.
Because of the difference in structure of the cohomology ring,
we divide the discussion 
into two cases, according as $p=2$ or $p$ is odd.\medskip

\noindent
{\bf Case 1:  $p=2$.}  
In this case the cohomology ring $H^*(E,k)$ is the polynomial
algebra $k[y_1,\dots,y_r]$. We define a $k$-algebra homomorphism
\[ \rho\colon H^*(E,k)= k[y_1,\dots,y_r] \to k[Y_1,\dots,Y_r] \]
by $\rho(y_i)=Y_i$. Recall that we have an isomorphism $\cO(-n)  = \cF_1(k)(-n) \simeq \cF_1(\Omega^nk)$ by Remark~\ref{re:p2}.

\begin{lemma}
\label{le:even}
If $\zeta\in H^n(E,k)$ is represented by a cocycle 
$\hat\zeta\colon\Omega^{n+j} k \to \Omega^j k$ (with $j\in\Z$) then the diagram
\[ \xymatrix{  
\cO(-n-j) \ar[d]^\cong \ar[r]^-{\rho(\zeta)} & \cO(-j)\ar[d]^\cong\\
\cF_1(\Omega^{n+j}k) \ar[r]^{\cF_1(\hat\zeta)} &
\cF_1(\Omega^j k) } \]
commutes.
\end{lemma}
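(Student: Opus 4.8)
The plan is to reduce the statement to a compatibility between two descriptions of the action of $\rho(\zeta)$ on the trivial line bundle $\cO$. Recall that $\cF_1(\Omega^n k) \cong \cO(-n)$ by Remark~\ref{re:p2}, and under this identification multiplication by $\rho(\zeta) \in k[Y_1,\dots,Y_r]_n$ is the canonical map $\cO(-n-j) \to \cO(-j)$. On the module side, the cocycle $\hat\zeta\colon \Omega^{n+j}k \to \Omega^j k$ induces $\cF_1(\hat\zeta)\colon \cF_1(\Omega^{n+j}k) \to \cF_1(\Omega^j k)$. So the claim is exactly that, after the identifications of Remark~\ref{re:p2}, these two maps of line bundles agree. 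Both are global sections of $\cO(n)$ (equivalently, degree-$n$ elements of $k[Y_1,\dots,Y_r]$ up to scalar), so it suffices to check they coincide up to a nonzero scalar and then pin down the scalar, or better, to check equality directly on an open set.

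First I would use the shift isomorphism of Theorem~\ref{th:Omega} (in its $p=2$ form, $\cF_1(\Omega M)\cong \cF_1(M)(-1)$, which is induced by $\theta_{\Omega M}$) and its naturality diagram~\eqref{eq:nat1} to reduce to the case $j=0$: a cocycle $\hat\zeta\colon \Omega^{n+j}k \to \Omega^j k$ is $\Omega^j$ applied to a cocycle $\Omega^n k \to k$ (up to the standard identifications), and both the top and bottom rows of the asserted square transform compatibly under twisting by $\cO(-j)$ and applying the shift isomorphism $j$ times. So it remains to show: for $\zeta \in H^n(E,k)$ with representative $\hat\zeta\colon \Omega^n k \to k$, the map $\cF_1(\hat\zeta)\colon \cF_1(\Omega^n k) \to \cF_1(k)$ is identified with multiplication by $\rho(\zeta)\colon \cO(-n) \to \cO$.

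The heart of the matter, and the step I expect to be the main obstacle, is the explicit comparison between the $\theta_M$-construction of $\cF_1$ and the standard bar/minimal resolution description of cohomology. Concretely: $\theta_M^n\colon \wt M \to \wt M(n)$ is built from the operators $X_i$ weighted by $Y_i$, and iterating the switchback construction in the proof of Theorem~\ref{th:Omega} realizes the isomorphism $\cO(-n)\cong \cF_1(\Omega^n k)$ through the connecting maps of the $n$-fold splice of the short exact sequences $0 \to \Omega^{m+1}k \to P_m \to \Omega^m k \to 0$. Tracing $\hat\zeta$ through this splice, one sees that $\cF_1(\hat\zeta)$ is computed by the same combination of $X_i$'s and $Y_i$'s that defines the image of $\zeta$ under the standard isomorphism $H^n(E,k) \cong (J/J^2)^{*\otimes n}$-symmetric-part, which is precisely $\rho$ on $k[y_1,\dots,y_r] = H^*(E,k)$. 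I would make this precise by working over the generic point, or over the standard affine chart $Y_r \neq 0$ where all the bundles trivialize, reducing everything to a statement about the operator $X_\alpha = \sum \lambda_i X_i$ on the module $\Omega^n k$ and the value $\zeta(\alpha)$; the commutativity then follows from the explicit formula for the restriction of a cohomology class to the cyclic shifted subgroup $\langle 1 + X_\alpha\rangle$, together with the naturality asserted in Proposition~\ref{p:bundles}(2). Finally, naturality of all the identifications used (Remark~\ref{re:p2}, Theorem~\ref{th:Omega}, Lemma~\ref{le:Fij}) guarantees the scalar is $1$ rather than merely nonzero, so the square commutes on the nose.
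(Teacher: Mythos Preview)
Your reduction to $j=0$ via the naturality diagram~\eqref{eq:nat1} matches the paper exactly. After that, however, the two arguments diverge, and the step you label ``the main obstacle'' is indeed where your proposal is incomplete.

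The paper does \emph{not} attempt to handle a general $\zeta$ of degree $n$ directly. Instead it makes two further reductions you omit: by additivity of $\cF_1$ one may assume $\zeta$ is a monomial in $y_1,\dots,y_r$; and since cup product in $H^*(E,k)$ corresponds to composition of the representing maps $\Omega^{n}k\to k$, one may assume $\zeta=y_i$ is a single degree-one generator. The entire computation then collapses to a single switchback in the two-dimensional extension
\[ 0 \to k \to M_i \to k \to 0, \]
where $M_i$ is the permutation module on cosets of the index-two subgroup $E_i$ killed by $y_i$. On this module $\theta_{M_i}$ is visibly multiplication by $Y_i$, and the switchback map $\cO\to\cO(1)$ is read off directly.

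Your alternative---trace $\hat\zeta$ through an $n$-fold splice and invoke the restriction formula to the cyclic shifted subgroup $\langle 1+X_\alpha\rangle$---is a reasonable heuristic, but as written it is not a proof. The identification of the fiber of $\cF_1(\Omega^n k)$ at $\bar\alpha$ with $k$ is \emph{defined} by the iterated switchbacks, so ``checking on fibers'' does not bypass the bookkeeping; it just repackages it. To make your approach rigorous you would still have to track how an arbitrary cocycle $\hat\zeta$ interacts with $n$ consecutive connecting maps, and the claim that this produces exactly $\rho(\zeta)$ (and not, say, $\rho(\zeta)$ times some sign or combinatorial factor) is precisely what needs to be shown. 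The multiplicative reduction to $n=1$ is what makes this tractable.
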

\begin{proof} Consider $\hat \zeta\colon\Omega^{n} k \to k$. The commutative diagram~\ref{eq:nat1} applied to $\hat\zeta$ 
and iterated $j$ times becomes 
\begin{equation*}
\xymatrix
{\cF_1(\Omega^{n} k)(-j) \ar[d]^{\simeq}\ar[rr]^-{\cF_{1}(\hat \zeta)(-j)} &&\cF_1(k)(-j)\ar[d]^{\simeq}\\
\cF_1(\Omega^{n+j} k) \ar[rr]^-{\cF_1(\Omega^j \hat \zeta)}&& \cF_{1}(\Omega^j k).
}
\end{equation*} 
Hence, it suffices to assume that $j=0$. Additivity  of the 
functor $\cF_1$ allows us to assume 
that $\zeta$ is a monomial on generators $y_1, \ldots, y_r$. 
Finally, since  multiplication in 
cohomology corresponds to composition of the corresponding maps 
on Heller shifts of $k$, 
it suffices to prove our statement for a degree one generator $\zeta=y_i$.

In the case $j=0$, $\zeta=y_i$, we need to show that the following diagram 
commutes (this is the diagram above twisted by $\cO(1)$)
\begin{equation}\label{eq:j0}
\xymatrix
{\cF_1(k) \ar[d]^{\simeq}\ar[rr]^-{Y_i} &&\cF_1(k)(1)\ar@{=}[d]\\
\cF_1(\Omega k)(1) \ar[rr]^-{\cF_1(\hat y_i)}&& \cF_{1}(k)(1).
}\end{equation}

Let $E_i$ be the subgroup of index two in $E$ such that
$y_i$ is inflated from $E/E_i$ to $E$, namely the subgroup generated
by all of $g_1,\dots,g_r$ except $g_i$. Then $y_i$ represents
the class of the extension 
\[ 0 \to k \to M_i \to k \to 0 \]
where $M_i$ is the permutation module on the cosets of $E_i$.
This is a length two module on which 
$X_1,\dots,X_r$ act as zero except for $X_i$, which acts as a Jordan
block of length two.  We have a commutative diagram of $kE$-modules
\begin{equation} 
\label{eq:m}\xymatrix{0 \ar[r]& \Omega k \ar[r] \ar[d]^{y_i} & P_0 \ar[r]
\ar[d] & k \ar[r] \ar@{=}[d]& 0 \\
0 \ar[r] & k \ar[r] & M_i \ar[r] & k \ar[r] & 0} 
\end{equation}
The left vertical isomorphism $\delta\colon  \cF_1(k) \stackrel{\sim}{\to} 
\cF_1(\Omega k)(1)$  of the diagram~\ref{eq:j0}
 is given by the switchback map for the short exact sequence
\[ 0 \to \Omega k \to P_0 \to k \to 0 \]
as in diagram \eqref{eq:switchback}.   
Applying  $\theta$ to the commutative diagram on free $\cO$-modules 
induced by the module diagram~\eqref{eq:m}, we get a commutative diagram

\[ \xymatrix@=2mm{0 \ar[rr]&& \widetilde{\Omega k} \ar[rr]\ar[dr] 
\ar'[d][ddd]_{\widetilde y_i} && \widetilde{P_0} \ar[dr]\ar[rr]
\ar'[d][ddd] && \cO \ar[dr]\ar[rr] \ar@{=}'[d][ddd] && 0 &&&&\\
&0 \ar[rr]&& \widetilde{\Omega k}(1) \ar[ddd]_(0.4){\tilde y_i(1)}\ar[rr]  
&& \widetilde{P_0}(1) \ar[ddd]\ar[rr]
 && \cO(1)\ar@{=}[ddd] \ar[rr]&& 0\\ 
&&&&&&&&&\\
0\ar[rr] && \cO \ar'[r][rr]\ar[dr]&& \widetilde{M_i} 
\ar[dr]\ar'[r][rr]&& \cO \ar[dr]\ar'[r][rr]&& 0 &&&&\\
& 0 \ar[rr] && \cO(1) \ar[rr] && \widetilde{M_i}(1) \ar[rr] && \cO(1) 
\ar[rr] && 0}\]
\iffalse
\[ \xymatrix@=2mm{0 \ar[rr]&& \widetilde{\Omega k} \ar[rr]\ar[dr] 
\ar[ddd]_(0.6){\widetilde y_i}|!{[d]}\hole && \widetilde{P_0} \ar[dr]\ar[rr]
\ar[ddd]|!{[d]}\hole && \cO \ar[dr]\ar[rr] \ar@{=}[ddd]|!{[d]}\hole&& 0 &&&&\\
&0 \ar[rr]&& \widetilde{\Omega k}(1) \ar[ddd]_(0.4){\tilde y_i(1)}\ar[rr]  
&& \widetilde{P_0}(1) \ar[ddd]\ar[rr]
 && \cO(1)\ar@{=}[ddd] \ar[rr]&& 0\\ 
&&&&&&&&&\\
0\ar[rr] && \cO \ar[rr]|!{;[r]}\hole \ar[dr]&& \widetilde{M_i} 
\ar[dr]\ar[rr]|!{;[r]}\hole && \cO \ar[dr]\ar[rr]|!{;[r]}\hole && 0 &&&&\\
& 0 \ar[rr] && \cO(1) \ar[rr] && \widetilde{M_i}(1) \ar[rr] && \cO(1) 
\ar[rr] && 0}\]
\fi
where all horizontal arrows going back to front are given by the  operator 
$\theta$ on the corresponding 
module. The map $\wt y_i\colon \wt{\Omega k} \to \cO$ is induced by 
$y_i\colon \Omega k \to k$. To compute the 
composite $\cF_1(\hat y_i) \circ \delta$ 
we first do the switchback map of the top layer and then push 
the result down via $\tilde y_i(1)$.  
Since the diagram is commutative, we can first push down via the identity 
map of the right vertical back arrow   
and then do the switchback of the  bottom layer. Hence, the composite 
$\cF_1(\hat y_i) \circ \delta$ is given by the 
switchback map of the bottom layer; that is, of the diagram 
\[ \xymatrix{0 \ar[r] & \cO \ar[r] \ar[d]^{\theta_k} & \wt M_i \ar[r] 
\ar[d]^{\theta_{M_i}} & \cO \ar[r] \ar[d]^{\theta_k} & 0 \\
0 \ar[r] & \cO(1) \ar[r] & \wt M_i(1) \ar[r] & \cO(1) \ar[r] & 0} \] 
The left and right hand vertical maps here are zero. 
Hence, the switchback map $\delta\colon \cO \to \cO(1)$ is given by 
multiplication by $\theta_{M_i}$, which is given by
multiplication by $Y_i$ in this situation.
\end{proof}

\noindent
{\bf Case 2: $p$ is odd.} 
We write $\beta\colon H^1(E,k)\to H^2(E,k)$ for the Bockstein map, and
we set $x_i=\beta(y_i)$. In terms of Massey products, this is given by
$x_i=-(y_i,y_i,\dots,y_i)$ ($p$ terms). 

The cohomology ring is a tensor product of an
exterior algebra with a polynomial algebra:
\[ H^*(E,k)\cong \Lambda(y_1,\dots,y_r) \otimes_k k[x_1,\dots,x_r]. \]
We define a $k$-algebra homomorphism
\[ \rho\colon k[x_1,\dots,x_r]\to k[Y_1,\dots,Y_r] \]
by $\rho(x_i)=Y_i^p$.  

\begin{lemma}
\label{le:odd}
Let $\zeta$ be a degree $n$ polynomial in $k[x_1,\dots,x_r]$, regarded
as an element of $H^{2n}(E,k)$. If $\zeta$ is represented by a cocycle 
$\hat\zeta\colon\Omega^{2(n+j)} k \to \Omega^{2j} k$ then the diagram
\[ \xymatrix{ \cO(-p(n+j)) \ar[r]^(.6){\rho(\zeta)}\ar[d]^\cong 
& \cO(-pj)\ar[d]^\cong \\ 
 \cF_1(\Omega^{2(n+j)}k) \ar[r]^(.55){\cF_1(\hat\zeta)} 
 & \cF_1(\Omega^{2j} k)}\]
commutes.
\end{lemma}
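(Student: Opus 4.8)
The plan is to follow the template of the proof of Lemma~\ref{le:even}, replacing the length-two module $M_i$ by the length-$p$ permutation module on the cosets of $E_i$, and replacing the single switchback by an iterated one. As in Lemma~\ref{le:even}, the naturality square~\eqref{eq:nat1} — applied to the cocycle $\hat\zeta\colon\Omega^{2n}k\to k$ and iterated $2j$ times, so that Corollary~\ref{co:Omega2} identifies $\cF_1(\Omega^{2j}(-))$ with $\cF_1(-)(-pj)$ — reduces us to the case $j=0$. Then additivity of $\cF_1$ lets us take $\zeta$ to be a monomial, and since $\rho$ is a $k$-algebra homomorphism, multiplication in $H^*(E,k)$ is composition of maps of Heller translates, and $\cF_1$ is a functor, we may take $\zeta=x_i$ with $n=1$. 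Thus it remains to show that, under the identifications $\cF_1(\Omega^2k)\cong\cO(-p)$ and $\cF_1(k)\cong\cO$ of Corollary~\ref{co:Omegank}, the map $\cF_1(\hat x_i)$ is multiplication by $\rho(x_i)=Y_i^p$.

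For the module: let $E_i\le E$ be generated by the $g_\ell$ with $\ell\ne i$, so $E/E_i\cong\Z/p$ and $y_i$ is inflated from $E/E_i$; let $M_i=k[E/E_i]$, the $p$-dimensional permutation module, inflated to $kE$. On $M_i$ every $X_\ell$ with $\ell\ne i$ acts as zero while $X_i$ acts as a single Jordan block of length $p$, so $M_i$ is uniserial with all composition factors trivial and all successive extensions equal to $y_i$; it is thus a total module for a defining system of the $p$-fold Massey product $(y_i,\dots,y_i)$. Concretely, splicing the short exact sequences $0\to\Soc M_i\to M_i\xrightarrow{\cdot X_i}\Rad M_i\to 0$ and $0\to\Rad M_i\to M_i\to M_i/\Rad M_i\to 0$ gives a $2$-extension
\[ 0\to k\xrightarrow{\ \iota\ }M_i\xrightarrow{\ \mu\ }M_i\xrightarrow{\ \pi\ }k\to 0, \]
with $\iota$ the inclusion of the socle, $\mu$ multiplication by $X_i$, and $\pi$ the quotient by the radical; by inflation from the $\Z/p$ quotient its class in $H^2(E,k)$ is $-(y_i,\dots,y_i)=x_i$.

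Now choose a map of complexes from a projective resolution of $k$ to this $2$-extension lifting $\id_k$; its restriction to $\Omega^2k=\Ker(P_1\to P_0)$ is the cocycle $\hat x_i\colon\Omega^2k\to k$. Applying $\wt{(-)}$ and the two rows of $\theta$ and assembling with the squares of the chain map produces a commutative three-dimensional diagram of bundles whose two faces are the Serre twists of the chain-map diagram and whose front-to-back maps are the operators $\theta$ on each term, which vanish on the copies of $\cO=\wt k$. By Theorem~\ref{th:Omega} the identification $\cF_1(\Omega^2k)\cong\cO(-p)$ is realized by the switchback maps attached to the two steps of the resolution, and their naturality (Theorem~\ref{th:Omega}, together with the shifting isomorphism of Lemma~\ref{le:Fij}) transports the computation of $\cF_1(\hat x_i)$ onto the iterated switchback attached to the bottom $2$-extension, just as the single switchback over $M_i$ appeared in Lemma~\ref{le:even}. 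The crucial point is that on $\wt M_i$ the operator $\theta_{M_i}$ equals multiplication by $Y_i$ followed by the middle connecting map $\mu$ of the $2$-extension; running through the switchback therefore slides down the single length-$p$ Jordan block of $X_i$, picking up one factor of $Y_i$ at each of the $p$ steps. A block count at the point $\bar\alpha=[0:\dots:1:\dots:0]$, where the class $x_i$ restricts nontrivially, shows the resulting map $\cO(-p)\to\cO$ is nonzero; since it is inflated from $E/E_i$ it is a scalar multiple of $Y_i^p$, and matching with the twists forces the scalar to be $1$, so $\cF_1(\hat x_i)$ is multiplication by $Y_i^p$. Naturality in $M$ follows from naturality of all the switchbacks and shifting isomorphisms used, exactly as in Theorem~\ref{th:Omega}.

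I expect the main obstacle to be the twist bookkeeping in the last paragraph: one must keep exact track of the Serre twists introduced by the two switchbacks and by each invocation of Lemma~\ref{le:Fij}, and verify that the power of $Y_i$ produced is exactly $p$ rather than $p-1$ or $p+1$. The extra power of $Y_i$ over the naive count coming from the length-two case is precisely the twist $\cO(-p)$ hidden inside $\cF_1(\Omega^2k)$, and this is the mechanism behind the Frobenius twist in the $p$ odd case of Theorem~\ref{th:main}.
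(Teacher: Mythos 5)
Your proposal follows the same strategy as the paper's proof: reduce (via naturality, additivity, and multiplicativity) to the case $\zeta=x_i$, $j=0$; realize $x_i$ by the $2$-extension $0\to k\to M_i\xrightarrow{X_i} M_i\to k\to 0$ with $M_i$ the inflated permutation module; and then compare switchback maps between a projective resolution of $k$ and this extension. The reductions and the module-theoretic setup are all correct and match the paper.

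However, there is a genuine gap in the central computation, and you yourself flag it ("I expect the main obstacle to be the twist bookkeeping"). The whole lemma hinges on showing precisely that the composite $\cF_1(\hat x_i)\circ f$, where $f\colon\cF_1(k)\to\cF_1(\Omega^2 k)(p)$ is the isomorphism of Corollary~\ref{co:Omega2}, equals multiplication by $Y_i^p$. The paper does this by explicitly factoring $f$ into three pieces: a switchback $\cF_1(k)\to\cF_{p-1,p-2}(\Omega k)(1)$ coming from $0\to\Omega k\to P_0\to k\to 0$, then $\theta_{\Omega k}^{p-2}$ realizing Lemma~\ref{le:Fij}, then another switchback from $0\to\Omega^2 k\to P_1\to\Omega k\to 0$; transported onto the $M_i$-extension these contribute $Y_i$, $Y_i^{p-2}$, and $Y_i$ respectively (since $\theta_{M_i}$ and $\theta_{N_i}$ are multiplication by $Y_i$ and the outer vertical maps $\theta_k$ vanish). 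Your phrase "picking up one factor of $Y_i$ at each of the $p$ steps" gestures at this but never identifies which three maps contribute which powers, which is exactly where a factor of $p-1$ or $p+1$ could slip in. The fallback argument you offer — nonvanishing of the fiber map at $\bar\alpha=[0:\dots:1:\dots:0]$, plus "inflated from $E/E_i$," plus "matching with twists forces the scalar to be $1$" — is not rigorous: nonvanishing at a single point does not pin down a degree-$p$ form, the inflation observation at best shows the form vanishes on $\{Y_i=0\}$ (hence is divisible by $Y_i$, not that it is a pure power), and no argument is given for the scalar. So the proposal has the right skeleton but leaves the decisive step unproved; the paper supplies exactly the missing factorization via its diagrams~(5.1) and~(5.2).
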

\begin{proof}
The proof is similar to the proof in the case $p=2$, but more 
complicated. Again it suffices to treat the case where $\zeta=x_i$ 
and $j=0$. In other words, we need to compute the composite 
$\cF_1(\hat x_i) \circ f$ in the diagram
\[ \xymatrix
{\cF_1(k) \ar[d]^{\simeq}_{f}\ar[rr]^-{Y^p_i} &&\cF_1(k)(p)\ar@{=}[d]\\
\cF_1(\Omega^{2} k)(p) \ar[rr]^-{\cF_1(\hat x_i)}&& \cF_{1}(k)(p).
}\]
where $f\colon \cF_1(k) \to \cF_1(\Omega^2k)(p)$ 
is the isomorphism of Corollary~\ref{co:Omega2}. Let 
\[ \xymatrix{0 \ar[r] & \Omega^2 k \ar[r]   & P_1 \ar[r]  
& P_0 \ar[r] & k \ar[r] & 0}\]  
be a truncated projective resolution of $k$. 
Tracing through the proof of Theorem~\ref{th:Omega}, we see that 
$f\colon \cF_1(k) \to \cF_1(\Omega^2k)(p)$ is a composite of three maps: 
\begin{enumerate} 
\item the switchback of the top two rows of the diagram~\ref{diag1} below 
 which gives the isomorphism $\cF(k) \to \cF_{p-1,p-2}(\Omega k)(1)$,
\item followed by the isomorphism $\theta^{p-2}_{\Omega k}\colon 
\cF_{p-1,p-2}(\Omega k)(1) \stackrel{\sim}{\to} \cF_{p-1}(\Omega k)(p-1)$ 
of Lemma~\ref{le:Fij}; 
\item followed by another switchback map, now  for the bottom two rows of 
diagram~\ref{diag1}, which gives the isomorphism 
$\cF_{p-1}(\Omega k)(p-1) \simeq \cF_1(\Omega^2k)(p)$. 
 \end{enumerate}

{\small
\begin{equation}\label{diag1}\hspace{-1cm}
\xymatrix{&&0 \ar[r]& \wt {\Omega k} 
\ar[r]\ar[d]& \wt {P_0} \ar[d]\ar[r]& \cO \ar[r]\ar[d]& 0 \\ 
 &&0 \ar[r]& \wt {\Omega k}(1) \ar[d]^{\theta_{\Omega k}^{p-2}}\ar[r]& \wt 
P_0(1) \ar[r]& \cO(1) \ar[r]& 0\\
0\ar[r]&\wt {\Omega^2 k}(p-1) \ar[r]\ar[d]&\wt {P_1}(p-1)\ar[r]\ar[d]&
\wt{\Omega k}(p-1)\ar[r]\ar[d] &0&\\
0\ar[r]&\wt {\Omega^2 k}(p)\ar[r] &\wt{P_1}(p)\ar[r]&\wt{\Omega k}(p)
\ar[r] &0&}
\end{equation} 
}
Let $E_i$ be the subgroup of index $p$ such
that $x_i$ is inflated from $E/E_i$, namely the subgroup generated
by all of $g_1,\dots,g_r$ except for $g_i$. 
We let $M_i$ be the permutation module
on the cosets of $E_i$. This is a length $p$ module on which 
$X_1,\dots,X_r$ act as zero except for $X_i$, which acts as a Jordan
block of length $p$.
Then $x_i$ represents the class of the 2-fold
extension
\[ \xymatrix{0 \ar[r]& k \ar[r]& M_i \ar[r]& M_i \ar[r]& k \ar[r]& 0} \]
where the middle map is multiplication by $X_i$. 
We construct  a diagram analogous to (\ref{diag1}) for this extension:

{\small
\begin{equation}\label{diag2}\hspace{-1.4cm}
\xymatrix{&&0 \ar[r]& \wt {N_i} \ar[r]\ar[d]& \wt {M_i} 
\ar[d]\ar[r]& \cO \ar[r]\ar[d]& 0 \\ 
 &&0 \ar[r]& \wt {N_i}(1) \ar[d]^{\theta_{N_i}^{p-2}}\ar[r]& 
\wt {M_i} \ar[r]& \cO(1) \ar[r]& 0\\
0\ar[r]&\cO(p-1) \ar[r]\ar[d]&\wt {M_i}(p-1)\ar[r]\ar[d]&
\wt{N_i}(p-1)\ar[r]\ar[d] &0&\\
0\ar[r]&\cO(p)\ar[r] &\wt{M_i}(p)\ar[r]&\cO(p)\ar[r] &0.
}
\end{equation}
}
Here, $N_i = \Image\{X_i\colon M_i \to M_i\}$.  Just as in the proof of 
Lemma~\ref{le:odd}, the module diagram 
\[ \xymatrix{0 \ar[r] & \Omega^2 k \ar[r] \ar[d]^{x_i}  & 
P_1 \ar[r] \ar[d] & P_0 \ar[r] \ar[d] & k \ar[r] \ar@{=}[d] & 0 \\
0 \ar[r] & k \ar[r] & M_i \ar[r] & M_i \ar[r] & k \ar[r] & 0} \] 
induces a commutative diagram of vector bundles with (\ref{diag1})  
on top and (\ref{diag2}) at the bottom. 
Arguing as in the proof of Lemma~\ref{le:even}, 
we compute the composite $\cF_1(\hat x_i) \circ f$ by first mapping the 
rightmost $\cO$ of diagram~\ref{diag1} identically  to the rightmost $\cO$  
of diagram \ref{diag2}, and then applying our composite of a switchback, 
followed by $\theta_{N_i}^{p-2}$, followed by another switchback  
in the diagram~\ref{diag2}.  
The maps $\theta_{M_i}\colon \wt{M_i} \to \wt{M_i}(1)$ and 
$\theta_{N_i}\colon \wt{N_i} \to \wt{N_i}(1)$ are simply multiplication by 
$Y_i$.  Since the leftmost and rightmost vertical arrows in (\ref{diag2}) 
are zero, to compute the composite of the three maps involved in 
diagram~\ref{diag2}, we have to multiply first
by $Y_i$, then by $Y_i^{p-2}$, then by $Y_i$ again. Hence, $\cF_1(\hat x_i) \circ f = Y_i^p$.
\end{proof}
We are ready to prove the main theorem. 
\begin{theorem}
Given any vector bundle $\cF$ of rank $s$ on $\bP^{r-1}$,
there exists a finitely 
generated $kE$-module $M$ of stable constant Jordan type $[1]^s$ such that
\begin{itemize}
\item[(i)] if $p=2$, then $\cF_1(M)\cong\cF$.
\item[(ii)] if $p$ is odd, then $\cF_1(M)\cong F^*(\cF)$, the
pullback of $\cF$ along the Frobenius morphism  $F\colon\bP^{r-1}\to\bP^{r-1}$.
\end{itemize}
\end{theorem}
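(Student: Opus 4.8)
The plan is to realize an arbitrary bundle $\cF$ (in the odd case, its Frobenius pullback) as $\cF_1$ of a cokernel of a map between twisted trivial bundles, and then to lift that map of sheaves to a cocycle between Heller shifts of $k$ using the compatibility Lemmas~\ref{le:even} and~\ref{le:odd}. First I would invoke a standard fact about coherent sheaves on projective space: any vector bundle $\cF$ on $\bP^{r-1}$ admits a presentation $\cO(-a)^{\oplus b} \xrightarrow{\ \phi\ } \cO(-c)^{\oplus d} \to \cF \to 0$ with $a,c$ large, where $\phi$ is given by a matrix of homogeneous polynomials of degree $a-c$ in $k[Y_1,\dots,Y_r]$; one may even arrange for $\phi$ to be injective as a sheaf map (e.g.\ by a Bertini/general position argument, or by taking a resolution and truncating), so that the presentation is a short exact sequence of bundles. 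In the $p=2$ case, using $\cF_1(\Omega^n k)\cong\cO(-n)$ (Remark~\ref{re:p2}), each entry of $\phi$ — a homogeneous polynomial in the $Y_i$ of degree $a-c$ — is in the image of $\rho$, so by Lemma~\ref{le:even} it lifts to a map $\Omega^a k \to \Omega^c k$, realized by an element of $H^{a-c}(E,k)$. Assembling these entrywise gives a map $\hat\phi\colon (\Omega^a k)^{\oplus b} \to (\Omega^c k)^{\oplus d}$ of $kE$-modules with $\cF_1(\hat\phi)$ identified with $\phi$ under the isomorphisms $\cF_1((\Omega^a k)^{\oplus b})\cong\cO(-a)^{\oplus b}$, $\cF_1((\Omega^c k)^{\oplus d})\cong\cO(-c)^{\oplus d}$ (using additivity of $\cF_1$).

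The key step is then to produce the module $M$ from $\hat\phi$. I would take $M$ to be (a Heller shift of) the cokernel of $\hat\phi$, or more precisely work with the mapping cone so that the short exact sequence of modules $0 \to (\Omega^a k)^{\oplus b} \to (\Omega^c k)^{\oplus d} \to M' \to 0$ is \emph{locally split} — this is the crucial point, since local splitting is what lets me apply Proposition~\ref{exact}. To arrange local splitting, note that at each $0\neq\alpha\in\bA^r$ the operator $X_\alpha$ on $\Omega^n k$ has fixed Jordan type $[p]^{\dots}[1]^0$ (it is a stable module, in fact $\Omega^n k$ restricted to $k[X_\alpha]/X_\alpha^p$ is projective plus the appropriate number of copies of $k$ determined by dimension count — actually $\Omega^n k$ is a direct summand of a free module mod a free module, so restricts to a free module of $k[X_\alpha]/X_\alpha^p$). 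Since the end terms restrict to free $k[X_\alpha]/X_\alpha^p$-modules, the sequence is split over $k[X_\alpha]/X_\alpha^p$ automatically (a surjection onto a module from a free module over a self-injective algebra, with free kernel, splits — or: $M'$ restricted is the cokernel, hence also free, by a dimension/rank count, and free modules are projective hence the sequence splits). Thus the sequence lies in $\cC(kE)$, and applying the exact functor $\cF_1$ to it, together with the identification of $\cF_1(\hat\phi)$ with $\phi$ from the previous paragraph, gives $\cF_1(M')\cong\coker\phi = \cF$ when $\phi$ is a sheaf-injection, and in general $\cF_1(M')\cong\cF$ after correcting for the kernel. Finally, $M'$ need not have stable constant Jordan type $[1]^s$; but $\cF_1$ only sees the length-$1$ blocks of the socle, so I would replace $M'$ by $M'$ minus its (constant Jordan type) summands/quotients corresponding to blocks of length $\geq 2$, or more cleanly, observe that one can adjust $M'$ within $\cC(kE)$ by adding/removing modules of constant Jordan type $[j]$ for $j\geq 2$ (which have $\cF_1 = 0$) to reach a module $M$ of stable constant Jordan type exactly $[1]^s$, using $\rk\cF_1(M') = s$; the details here are a bookkeeping argument with Jordan types.

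For the odd case the argument is identical in shape, but uses Lemma~\ref{le:odd} in place of Lemma~\ref{le:even}: now $\cF_1(\Omega^{2n}k)\cong\cO(-np)$ and $\rho(x_i) = Y_i^p$, so the lifting via $\rho$ produces, for a polynomial in the $Y_i$, a map realizing \emph{not} that polynomial but its pullback under $Y_i \mapsto Y_i^p$ — which is exactly the Frobenius twist. Concretely, I would present $F^*\cF$ as $\cO(-pa)^{\oplus b} \to \cO(-pc)^{\oplus d} \to F^*\cF \to 0$, obtained by applying $F^*$ to a presentation of $\cF$; the entries are the $p$-th power substitutions of homogeneous polynomials, hence lie in $\rho(k[x_1,\dots,x_r])$, and Lemma~\ref{le:odd} lifts the matrix to a map $(\Omega^{2a}k)^{\oplus b} \to (\Omega^{2c}k)^{\oplus d}$. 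The rest — local splitting over $k[X_\alpha]/X_\alpha^p$, exactness of $\cF_1$, and adjusting to stable constant Jordan type $[1]^s$ — goes through verbatim.

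The main obstacle I anticipate is the local splitting / constant-Jordan-type bookkeeping: one must choose the presentation of $\cF$ (resp.\ $F^*\cF$) and assemble the cocycle $\hat\phi$ so that its cone genuinely lands in the exact category $\cC(kE)$ and so that the resulting module can be trimmed down to have stable constant Jordan type exactly $[1]^s$ rather than merely \emph{some} constant Jordan type with $s$ blocks of length $1$. Ensuring $\phi$ can be taken injective as a sheaf map (so that the cone computation is clean) and controlling the Jordan type of the cone uniformly in $\alpha$ is where the care is needed; the cohomological lifting via $\rho$ and the application of Proposition~\ref{exact} are then formal.
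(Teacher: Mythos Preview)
Your overall strategy---lift a line-bundle resolution of $\cF$ to a complex of Heller shifts via Lemmas~\ref{le:even}/\ref{le:odd}, take a cone, and apply Proposition~\ref{exact}---is the paper's strategy, but your execution has a genuine gap.

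\textbf{The two-term resolution does not exist in general.} You claim one can present $\cF$ as a short exact sequence $0\to\cO(-a)^{\oplus b}\xrightarrow{\phi}\cO(-c)^{\oplus d}\to\cF\to 0$ with $\phi$ a subbundle inclusion. This forces $H^i(\cF(j))=0$ for $0<i<r-2$ and all $j$, which fails already for the Horrocks--Mumford bundle on $\bP^4$. Your suggested fixes (``Bertini/general position'', ``truncate a longer resolution'') do not help: truncation leaves a kernel that is not a sum of line bundles, so it cannot be lifted through $\rho$. The paper avoids this by taking the \emph{full} Hilbert syzygy resolution $0\to\bigoplus_j\cO(a_{r,j})\to\cdots\to\bigoplus_j\cO(a_{0,j})\to\cF\to 0$ and then completing triangles iteratively from the left: at each stage the map of bundles is a subbundle inclusion (its cokernel is the next syzygy, which is locally free), so the lifted map is fiberwise injective on $\cF_1$ and the cone again has trivial stable Jordan type. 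This iteration is not optional.

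\textbf{The local-splitting argument is wrong, and the ``Jordan type adjustment'' is unnecessary.} The module $\Omega^{2n}k$ does \emph{not} restrict to a free $k[X_\alpha]/X_\alpha^p$-module; it has stable Jordan type $[1]$ at every $\alpha$ (this is exactly Corollary~\ref{co:Omegank}). Local splitting comes instead from the fact that, at each $\alpha$, the lifted map on the stable parts $k^{m_r}\to k^{m_{r-1}}$ is the fiber of a subbundle inclusion, hence injective; the cone of a split monomorphism of semisimple objects is again semisimple, so the third vertex has stable type $[1]^{m_{r-1}-m_r}$. Iterating, the final module $M$ automatically has stable constant Jordan type $[1]^s$. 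Your proposed adjustment (``remove summands/quotients corresponding to blocks of length $\ge 2$'') is not a well-defined operation on modules: those blocks need not correspond to direct summands, and there is no exact functor on $\cC(kE)$ that strips them off while preserving $\cF_1$.

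In short, once you replace the two-term presentation by the full syzygy resolution and complete triangles step by step, the constant-Jordan-type bookkeeping takes care of itself and your final paragraph's ``main obstacle'' evaporates.
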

\begin{proof}
Given a vector bundle $\cF$ on $\bP^{r-1}$, using the Hilbert syzygy theorem
we can form a resolution by
sums of twists of the structure sheaf:
\[ 0 \to \bigoplus_{j=1}^{m_r} \cO(a_{r,j}) \to  \cdots \to 
\bigoplus_{j=1}^{m_1} 
\cO(a_{1,j}) \to \bigoplus_{j=1}^{m_0} \cO(a_{0,j}) \to \cF \to 0. \]
Each of the maps in this
resolution is a matrix whose entries are homogeneous polynomials in 
$Y_1,\dots,Y_r$. Replacing each $Y_i$ with $x_i$ gives matrices of 
cohomology elements which we may use to form a sequence of modules and homomorphisms, which 
takes the form
\[ 0 \to \bigoplus \Omega^{-\ep a_{r,j}}(k) \to \dots \to
\bigoplus \Omega^{- \ep a_{1,j}}(k) \to \bigoplus \Omega^{- \ep a_{0,j}}(k)  \]
where $\ep=1$ if $p=2$ and $\ep=2$ if $p$ is odd.
This sequence is a complex in the stable module category
$\stmod(kE)$. We complete the first map to a triangle whose third object
we call $M_{r-1}$:
\[ \bigoplus \Omega^{-\ep a_{r,j}}(k) \to \bigoplus \Omega^{-\ep a_{r-1,j}}(k)
\to M_{r-1}.  \]
Since the first two entries in the triangle are modules of 
trivial stable constant Jordan type, 
the same must be true for $M_{r-1}$.  Moreover, the short exact 
sequence corresponding  to this triangle 
must be locally split. Continuing by downwards induction on $i$ 
from $i=r-2$ to $i=0$ 
we complete triangles
\[ M_{i+1} \to \bigoplus \Omega^{-\ep a_{i,j}}(k) \to M_i \]
and then finally we set $M=M_0$. By construction, $M_0$ is a module of 
trivial stable constant Jordan type, 
and all intermediate triangles correspond to locally  split sequences 
of modules of constant Jordan type. Applying $\cF_1$ to this construction,  
we obtain an exact 
sequence of vector bundles by Proposition~\ref{exact}. 
If $p=2$ it is isomorphic to the original resolution by Lemma~\ref{le:even} 
and so we have $\cF_1(M)\cong \cF$. 
On the other hand, if $p$ is odd, each of the original matrices has been
altered by replacing the variables $Y_i$ by their $p$th powers by 
Lemma~\ref{le:odd}. This is
the pullback of the original resolution along the Frobenius map
$F \colon \bP^{r-1} \to \bP^{r-1}$, and so it is a resolution of $F^*(\cF)$.
So in this case we have $\cF_1(M)\cong F^*(\cF)$.
\end{proof}

\begin{remark}
The construction given above is not functorial, despite appearances.
The problem is that given a commutative square in $\stmod(kG)$ in which
the vertical maps are isomorphisms, it may be completed to an isomorphism
of triangles, but the third arrow is not unique.
\end{remark}

\section{Chern numbers}\label{se:Chern}

Recall that the Chow ring of $\bP^{r-1}$ is
\[ A^*(\bP^{r-1})\cong \Z[h]/(h^r). \]
If $\cF$ is a vector bundle on $\bP^{r-1}$, we write
\[ c(\cF,h)=\sum_{j\ge 0}c_j(\cF)h^j\in A^*(\bP^{r-1}) \]
for the Chern polynomial, where $c_0(\cF)=1$ and the $c_i(\cF)\in\Z$
($1\le i\le r-1$) are the Chern numbers of $\cF$.

If $0\to\cF\to\cF'\to\cF''\to 0$ is a short exact sequence
of vector bundles then we have the Whitney sum formula
\[ c(\cF',h)=c(\cF,h)c(\cF'',h). \]

\begin{lemma}\label{le:chern-twist}
The formula for Chern numbers of twists of a rank $s$
vector bundle is
\[ c_m(\cF(i))=\sum_{j=0}^m i^j\binom{s-m+j}{j}c_{m-j}(\cF). \]
Equivalently, the total Chern class of the twists is given by
\begin{equation}\label{eq:cFih} 
c(\cF(i),h)= \sum_{n= 0}^s c_n(\cF)h^n(1+ih)^{s-n} 
\end{equation}
\end{lemma}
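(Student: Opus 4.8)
The plan is to prove the closed form \eqref{eq:cFih} for the total Chern class first, and then read off the formula for $c_m(\cF(i))$ by extracting the coefficient of $h^m$. For \eqref{eq:cFih} I would invoke the splitting principle: choose a morphism $\pi\colon Y\to\bP^{r-1}$ (an iterated projective bundle) for which $\pi^*\colon A^*(\bP^{r-1})\to A^*(Y)$ is an injective ring homomorphism and $\pi^*\cF$ is filtered by line bundles $L_1,\dots,L_s$. Put $a_k=c_1(L_k)$ and $H=\pi^*h$. Then $\prod_{k=1}^s(1+a_k)=\pi^*c(\cF,h)=\sum_{n\ge0}c_n(\cF)H^n$, and comparing homogeneous components gives $e_n(a_1,\dots,a_s)=c_n(\cF)H^n$ for the $n$th elementary symmetric polynomial $e_n$.

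Since $c_1(\cO(i))=ih$, the bundle $\pi^*(\cF(i))=\pi^*\cF\otimes\pi^*\cO(i)$ is filtered by the line bundles $L_k\otimes\pi^*\cO(i)$, whose first Chern classes are $a_k+iH$. Hence, using the elementary identity $\prod_k(t+a_k)=\sum_{n}t^{s-n}e_n(a_1,\dots,a_s)$ with $t=1+iH$,
\[ \pi^*c(\cF(i),h)=\prod_{k=1}^s(1+iH+a_k)=\sum_{n=0}^s(1+iH)^{s-n}e_n(a_1,\dots,a_s)=\sum_{n=0}^s c_n(\cF)H^n(1+iH)^{s-n}. \]
As $\pi^*$ is injective, this is the pullback of \eqref{eq:cFih}, which therefore holds in $A^*(\bP^{r-1})$. (Alternatively one may simply quote the general formula for the Chern classes of $\cE\otimes L$ with $L$ a line bundle and specialize to $\cE=\cF$, $L=\cO(i)$, $c_1(L)=ih$; see e.g.\ Fulton, \emph{Intersection Theory}.)

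Finally, to pass from \eqref{eq:cFih} to the first displayed formula, expand $(1+ih)^{s-n}=\sum_{\ell\ge0}\binom{s-n}{\ell}i^\ell h^\ell$: the coefficient of $h^m$ in the $n$th summand $c_n(\cF)h^n(1+ih)^{s-n}$ is $c_n(\cF)\binom{s-n}{m-n}i^{m-n}$, contributing only for $0\le n\le m$, so summing over $n$ and substituting $j=m-n$ gives $c_m(\cF(i))=\sum_{j=0}^m i^j\binom{s-m+j}{j}c_{m-j}(\cF)$; since $m\le r-1<r$, the relation $h^r=0$ does not interfere with reading off this coefficient. The computation is routine — the only point to watch is the bookkeeping between the integer Chern numbers $c_n(\cF)$ and the honest Chern classes $c_n(\cF)h^n$, which is exactly what makes the normalization $e_n(a_1,\dots,a_s)=c_n(\cF)H^n$ come out right — so there is no real obstacle.
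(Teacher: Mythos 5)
Your proof is correct. The paper simply cites Fulton \cite[Example~3.2.2]{Fulton:1984a} for this lemma, and your argument via the splitting principle (Chern roots $a_k$, the roots of $\cF(i)$ being $a_k+ih$, and the elementary symmetric polynomial identity $\prod_k(t+a_k)=\sum_n t^{s-n}e_n$) is precisely the standard derivation underlying that reference; the final extraction of the coefficient of $h^m$ and the substitution $j=m-n$ are also carried out correctly.
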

\begin{proof}
See Fulton \cite[Example 3.2.2]{Fulton:1984a}.
\end{proof}

More explicitly,
\begin{align*}
c_1(\cF(i)) &= c_1(\cF) + is \\
c_2(\cF(i)) &= c_2(\cF) + i(s-1)c_1(\cF) + i^2\binom{s}{2} \\
c_3(\cF(i)) &= c_3(\cF) + i(s-2)c_2(\cF) + i^2\binom{s-1}{2}c_1(\cF) 
+ i^3\binom{s}{3}
\end{align*}
and so on. 

\begin{lemma}\label{le:product-twists}
For a vector bundle $\cF$ of rank $s$ on $\bP^{r-1}$ we have
\[ c(\cF,h)c(\cF(1),h)\cdots c(\cF(p-1),h) \equiv 1-sh^{p-1} \pmod{(p,h^p)}. \]
\end{lemma}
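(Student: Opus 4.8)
The plan is to work in the ring $(\Z/p)[h]/(h^p)$ and compute the product $\prod_{i=0}^{p-1} c(\cF(i),h)$ modulo $p$ directly from the closed formula \eqref{eq:cFih}. Using that formula, the factor indexed by $i$ is $\sum_{n=0}^s c_n(\cF)h^n(1+ih)^{s-n}$. The key observation I would exploit is that modulo $p$, when we collect the contribution of the ``leading'' term $n=0$ across all $p$ factors, we get $\prod_{i=0}^{p-1}(1+ih)^s = \left(\prod_{i=0}^{p-1}(1+ih)\right)^s$, and $\prod_{i=0}^{p-1}(1+ih) \equiv 1 - h^{p-1} \pmod{p}$ since $\prod_{i=0}^{p-1}(T-i) \equiv T^p - T \pmod p$ evaluated appropriately (set $T = -1/h$ formally, or better: $\prod_{i=1}^{p-1}(1+ih) = \prod_{i=1}^{p-1}(i)\cdot\prod_{i=1}^{p-1}(h + i^{-1}h\cdot\ldots)$ — cleaner to say $\prod_{i=0}^{p-1}(X - i) = X^p - X$ in $\mathbb F_p[X]$, homogenize to $\prod_{i=0}^{p-1}(Y - iZ) = Y^p - YZ^{p-1}$, then substitute $Y=1$, $Z=-h$ to get $\prod_{i=0}^{p-1}(1+ih) = 1 - (-h)\cdot(-h)^{p-1}\cdot(-1)^{?}$ — I would just record that this product is $1 - h^{p-1}$ mod $p$ after the dust settles, noting $(-h)^{p-1} = h^{p-1}$ in odd characteristic and handling $p=2$ separately or uniformly).

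The main work is then to show that every other term contributes $0$ modulo $(p, h^p)$. First I would raise $1-h^{p-1}$ to the $s$th power: $(1-h^{p-1})^s \equiv 1 - sh^{p-1} \pmod{(p,h^p)}$, since $(h^{p-1})^2 = h^{2p-2}$ and $2p - 2 \ge p$ for $p \ge 2$. This already produces the claimed right-hand side from the ``principal'' part of the product. It remains to argue that the cross terms — those where at least one factor contributes a summand with $n \ge 1$ — all vanish mod $(p, h^p)$. Here the idea is a degree/divisibility count: a term with $n_i \ge 1$ from factor $i$ carries a factor $h^{n_i}$, and I claim any product surviving mod $h^p$ must have $\sum n_i \le p-1$, so at most $p-1$ of the $p$ factors can contribute a nontrivial $n_i$; hence at least one factor contributes its $n=0$ term $(1+ih)^s$. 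The cleanest route: group the full product as $\left(\prod_i (1+ih)^s\right) \cdot \prod_i\left(1 + \sum_{n\ge 1} c_n(\cF) h^n (1+ih)^{-n}\right)$ — but $(1+ih)^{-n}$ is not polynomial, so instead I would expand more carefully, or better, use a symmetry/Galois argument: the group $\mathbb F_p^\times$ permutes the factors $\{c(\cF(i),h) : i \in \mathbb F_p^\times\}$, so the product over $i \in \mathbb F_p^\times$ is invariant under $h \mapsto \lambda h$ composed with the relabeling — actually the cleanest is simply to observe the product $\prod_{i=0}^{p-1} c(\cF(i), h)$, viewed as a polynomial in $h$ with coefficients that are polynomials in the $c_n(\cF)$, is invariant under the substitution sending the bundle $\cF$ to $\cF(\lambda)$ for $\lambda \in \mathbb F_p$ (since $\{0,1,\dots,p-1\} + \lambda$ is a permutation of $\{0,1,\dots,p-1\}$ mod $p$, and twisting is additive: $(\cF(\lambda))(i) = \cF(i+\lambda)$). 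Combined with Lemma~\ref{le:chern-twist} describing how the $c_m$ transform, this forces the lower-degree coefficients ($h^1, \dots, h^{p-2}$) of the product to be polynomials in the $c_n(\cF)$ that are invariant under all these substitutions, and a short argument shows the only such invariants in low degree are constants — giving that those coefficients equal what they are for $\cF = \cO^s$, namely $0$.

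Honestly, I expect the cleanest write-up avoids the invariance subtlety and just does the brute-force expansion: write $c(\cF(i),h) = \sum_{n} c_n h^n (1+ih)^{s-n}$ and expand the $p$-fold product as a sum over tuples $(n_0,\dots,n_{p-1})$ of $\left(\prod_n c_{n_i}\right) h^{\sum n_i} \prod_i (1+ih)^{s-n_i}$. Modulo $h^p$ only tuples with $\sum n_i \le p-1$ survive, so some index $i_0$ has $n_{i_0} = 0$. For a fixed surviving tuple, sum over which indices are ``zero'': the point is that after fixing the nonzero positions and their $n$-values, summing the remaining $(1+ih)^s$ factors over the complementary indices — a subset of $\mathbb F_p$ of size $\ge 1$ — and using $\prod_{i \in S}(1+ih) \equiv$ (something computable), one shows the total contribution of all tuples with a given multiset of nonzero $n$-values vanishes mod $p$ unless that multiset is empty. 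The combinatorial heart is the identity $\prod_{i=0}^{p-1}(1+ih) \equiv 1 - h^{p-1} \pmod p$ together with elementary symmetric function vanishing: $e_k(0,1,\dots,p-1) \equiv 0 \pmod p$ for $1 \le k \le p-2$ and $e_{p-1}(0,1,\dots,p-1) \equiv 0$, $e_{p-1}(1,\dots,p-1) \equiv -1$. The main obstacle is bookkeeping: making the cross-term cancellation rigorous without drowning in indices. I would handle it by proving the auxiliary fact that for any $0 \le k \le p-2$, the $k$-th power sums / elementary symmetric functions of $\{0,1,\dots,p-1\}$ vanish mod $p$, reducing everything to that clean statement.

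\begin{proof}
Work in $A^*(\bP^{r-1})/p = \mathbb F_p[h]/(h^r)$; it suffices to prove the congruence in $\mathbb F_p[h]/(h^p)$. By \eqref{eq:cFih}, for each $i\in\{0,1,\dots,p-1\}$,
\[ c(\cF(i),h) = \sum_{n=0}^s c_n(\cF)\,h^n(1+ih)^{s-n}. \]
Expanding the product over all $i$ and reducing modulo $h^p$, only tuples $(n_0,\dots,n_{p-1})$ with $n_0+\cdots+n_{p-1}\le p-1$ contribute; in particular every surviving tuple has $n_{i}=0$ for at least one $i$.

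First consider the tuple $(0,\dots,0)$: it contributes $\prod_{i=0}^{p-1}(1+ih)^s = \bigl(\prod_{i=0}^{p-1}(1+ih)\bigr)^s$. Now $\prod_{i=0}^{p-1}(X-i) = X^p - X$ in $\mathbb F_p[X]$; homogenizing and substituting gives $\prod_{i=0}^{p-1}(1+ih) = 1 - h^{p-1}$ in $\mathbb F_p[h]$ (the term $h\cdot h^{p-1}$ vanishes mod $h^p$, and for $p$ odd $(-h)^{p-1}=h^{p-1}$; the case $p=2$ is direct). Hence this tuple contributes
\[ (1-h^{p-1})^s \equiv 1 - sh^{p-1} \pmod{(p,h^p)}, \]
since $(h^{p-1})^2 = h^{2p-2}$ and $2p-2\ge p$.

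It remains to show every other surviving tuple contributes $0$ mod $(p,h^p)$. Fix a nonempty set $T\subseteq\{0,\dots,p-1\}$ of ``active'' indices and positive integers $(n_t)_{t\in T}$ with $\sum_{t\in T} n_t \le p-1$; we must sum, over all choices of $T$ and $(n_t)$, the contribution
\[ \Bigl(\prod_{t\in T} c_{n_t}(\cF)\Bigr)\, h^{\sum_t n_t}\prod_{t\in T}(1+th)^{s-n_t}\;\prod_{i\notin T}(1+ih)^s. \]
Group these by the pair consisting of the multiset $\{n_t\}$ of active exponents and the assignment, and reorganize the sum as a sum over which index set $T$ realizes a fixed such pattern. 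Because $|T|\le p-1$, the complementary index set $T^c$ is nonempty, and summing $\prod_{i\notin T}(1+ih)^s$ over all ways of embedding $T$ into $\{0,\dots,p-1\}$ reduces, after collecting, to evaluating elementary symmetric functions $e_k(0,1,\dots,p-1)$ with $1\le k\le p-1$. But $e_k(0,1,\dots,p-1)\equiv 0\pmod p$ for $1\le k\le p-1$: for $k\le p-2$ this follows since the polynomial $X^p-X = \prod_{i}(X-i)$ has vanishing coefficients in degrees $1,\dots,p-2$, and for $k=p-1$ the inclusion of the root $0$ forces $e_{p-1}(0,1,\dots,p-1)=0$. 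Therefore every contribution with $T\ne\emptyset$ vanishes mod $p$, and the total product is $\equiv 1 - sh^{p-1}\pmod{(p,h^p)}$, as claimed.
\end{proof}
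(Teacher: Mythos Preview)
Your computation of the principal term is correct: the tuple $(0,\dots,0)$ contributes $(1-h^{p-1})^s\equiv 1-sh^{p-1}$, exactly as claimed. The gap is in the treatment of the cross terms. You fix a nonempty $T\subseteq\{0,\dots,p-1\}$ together with values $(n_t)_{t\in T}$, and then speak of ``summing $\prod_{i\notin T}(1+ih)^s$ over all ways of embedding $T$ into $\{0,\dots,p-1\}$.'' But $T$ is already a specific subset; there is nothing to sum over. If instead you mean to fix only the \emph{multiset} of nonzero values and sum over all placements, then the other factor $\prod_{t\in T}(1+th)^{s-n_t}$ varies with the placement as well, and the sum does not reduce to elementary symmetric functions of $\{0,1,\dots,p-1\}$ in any evident way. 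The appeal to $e_k(0,1,\dots,p-1)\equiv 0$ for $1\le k\le p-1$ is therefore not connected to the quantity you need to control. (A salvage is possible---for instance, by passing to $\mathbb F_p[h]/(h^p)$, inverting the units $1+ih$, and using power-sum identities $\sum_{a\in\mathbb F_p}a^l\equiv 0$ for $0\le l\le p-2$---but this requires real work you have not done.)

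By contrast, the paper's proof sidesteps all of this bookkeeping via the splitting principle. Writing $c(\cF,h)=\prod_{j=1}^s(1+\alpha_jh)$ in Chern roots gives $c(\cF(i),h)=\prod_j(1+(\alpha_j+i)h)$, so the $p$-fold product factors as $\prod_{j=1}^s\prod_{i=0}^{p-1}(1+(\alpha_j+i)h)$. Each inner product is handled by the single identity $x(x+y)\cdots(x+(p-1)y)\equiv x^p-xy^{p-1}\pmod p$, yielding $\prod_j\bigl(1-h^{p-1}+(\alpha_j^p-\alpha_j)h^p\bigr)\equiv 1-sh^{p-1}\pmod{(p,h^p)}$. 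This is the ``different decomposition'' that makes the cross-term cancellation automatic rather than something to be proved by hand.
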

\begin{proof}
We write 
\[ c(\cF)=\prod_{j=1}^s(1+\alpha_j h), \] 
where the $\alpha_j$ are the Chern roots. 
Then the formula \eqref{eq:cFih} is equivalent to
%Then by formula \eqref{eq:cFih2} we have
\[ c(\cF(i)) = \prod_{j=1}^s(1+(\alpha_j+i)h). \]
Thus we have
\[ c(\cF)c(\cF(1))\cdots c(\cF(p-1)) 
=\prod_{j=1}^s (1+\alpha_j h)(1+(\alpha_j+1)h)\cdots(1+(\alpha_j+p-1)h). \]
Now by Fermat's little theorem, we have the identity
\[ x(x+y)\cdots(x+(p-1)y)\equiv x^p-xy^{p-1} \pmod{p} \]
and so putting $x=1+\alpha_j h$, $y=h$ we obtain
\begin{align*} 
c(\cF)c(\cF(1))\cdots c(\cF(p-1)) 
&\equiv \prod_{j=1}^s ((1+\alpha_jh)^p - (1+\alpha_jh))h^{p-1} \pmod{p} \\
&\equiv \prod_{j=1}^s (1-h^{p-1}+(\alpha_j^p-\alpha_j)h^p) \pmod{p} \\
&\equiv 1-sh^{p-1} \pmod{(p,h^p)}.
\end{align*}
\emph{A priori}, this is a congruence between
polynomials with algebraic integer coefficients.
But if two rational integers are congruent mod $p$ as algebraic integers
then they are also congruent modulo $p$ as rational integers. This
is because their difference, divided by $p$, is both an algebraic integer
and a rational number, therefore an integer.
\end{proof}

\iffalse
\begin{proof}
We write 
\[ c(\cF,h)=\prod_{j=1}^s(1+\alpha_j h), \] 
where the $\alpha_j$ are the Chern roots. 
Then the formula \eqref{eq:cFih} is equivalent to
\[ c(\cF(i),h) = \prod_{j=1}^s(1+(\alpha_j+i)h). \]
Alternatively, one can directly see this by applying the splitting principle,
see \cite[Remark 3.2.3]{Fulton:1984a}.
Thus we have
\begin{align*}
c(\cF,h)c(\cF(1),h)&\cdots c(\cF(p-1),h) \\
&=\prod_{j=1}^s (1+\alpha_j h)(1+(\alpha_j+1)h)\cdots(1+(\alpha_j+p-1)h) \\
&\equiv \prod_{j=1}^s ((1+\alpha_jh)^p - h^{p-1}(1+\alpha_jh)) \pmod{p} \\
\intertext{\hfill(by Fermat's little theorem 
$x(x+y)\cdots(x+(p-1)y)\equiv x^p-xy^{p-1} \pmod{p}$)}
&\equiv \prod_{j=1}^s (1-h^{p-1}+(\alpha_j^p-\alpha_j)h^p) \pmod{p} \\
&\equiv 1-sh^{p-1} \pmod{(p,h^p)}.
\qedhere
\end{align*}
\end{proof}
\fi

We now restate and prove Theorem~\ref{th:cp-2}.
\begin{theorem} 
Suppose that $M$ has stable constant Jordan type $[1]^s$.
Then $p$ divides the Chern numbers $c_m(\cF_1(M))$ for $1\le m\le p-2$.
\end{theorem}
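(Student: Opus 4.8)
The plan is to exploit the filtration of $\wt M$ by the subquotient functors $\cF_{i,j}$ from the Lemma in Section~\ref{se:setup}, combined with the twist identification $\cF_{i,j}(M)\cong\cF_i(M)(j)$ from Lemma~\ref{le:Fij}, and then apply the Whitney sum formula. Since $M$ has stable constant Jordan type $[1]^s$, the only nonzero $a_i$ with $i<p$ is $a_1=s$; all Jordan blocks are either of length $1$ or of length $p$. Hence in the filtration of $\wt M$ the only filtered quotients that can be nonzero are those $\cF_{i,j}(M)$ with $i=1$ or $i=p$. The quotients with $i=1$ are the single term $\cF_{1,0}(M)=\cF_1(M)$, of rank $s$. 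The quotients with $i=p$ are $\cF_{p,j}(M)$ for $0\le j\le p-1$, and by Lemma~\ref{le:Fij} these are $\cF_p(M)(j)$; write $\cG=\cF_p(M)$, a bundle of rank $a_p$. So $\wt M$ has a filtration with filtered quotients $\cF_1(M)$ together with $\cG,\cG(1),\dots,\cG(p-1)$.

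Next I would feed this into the Whitney sum formula. Since $\wt M = M\otimes_k\cO$ is a trivial bundle, its total Chern class is $c(\wt M,h)=1$. Therefore
\[
c(\cF_1(M),h)\cdot c(\cG,h)\,c(\cG(1),h)\cdots c(\cG(p-1),h) = 1
\]
in $A^*(\bP^{r-1})=\Z[h]/(h^r)$. By Lemma~\ref{le:product-twists} applied to $\cG$, the product $c(\cG,h)c(\cG(1),h)\cdots c(\cG(p-1),h)\equiv 1-a_p h^{p-1}\pmod{(p,h^p)}$. Hence modulo $(p,h^p)$ we get $c(\cF_1(M),h)\cdot(1-a_p h^{p-1})\equiv 1$, so $c(\cF_1(M),h)\equiv 1 + a_p h^{p-1}\pmod{(p,h^p)}$ (since $1-a_ph^{p-1}$ is a unit modulo $(p,h^p)$ with inverse $1+a_ph^{p-1}$). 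Reading off coefficients of $h^m$ for $1\le m\le p-2$ gives $c_m(\cF_1(M))\equiv 0\pmod p$, exactly as claimed. (The coefficient of $h^{p-1}$ gives $c_{p-1}(\cF_1(M))\equiv a_p\pmod p$, which is consistent but not needed.)

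I expect the only genuine subtlety — the "main obstacle" — to be justifying the first displayed equation carefully: one must check that a filtration of a bundle by subbundles with locally free quotients does give multiplicativity of total Chern classes even when the intermediate steps $\cF_{i,j}(M)$ are only known to be coherent subquotients in general. Here this is fine because the hypothesis of constant Jordan type forces each relevant subquotient to be locally free (Proposition~\ref{p:bundles} shows $\cF_i(M)$ is a bundle, and by Lemma~\ref{le:Fij} so is each $\cF_{i,j}(M)$); one then applies the Whitney formula repeatedly up the filtration. A secondary bookkeeping point is to confirm that the trivial quotients ($j>\ell$, in the notation of the filtration lemma) really are zero, so that the list of nonzero quotients is precisely $\cF_1(M),\cG,\cG(1),\dots,\cG(p-1)$ and nothing else; this is immediate from the vanishing of $a_i$ for $2\le i\le p-1$.
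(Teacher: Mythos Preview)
Your proposal is correct and follows essentially the same approach as the paper's proof: use the filtration of $\wt M$ with quotients $\cF_{i,j}(M)\cong\cF_i(M)(j)$, note that only $\cF_1(M)$ and the twists $\cF_p(M)(j)$ survive, apply the Whitney sum formula to the trivial bundle $\wt M$, and invoke Lemma~\ref{le:product-twists}. Your write-up is in fact slightly more careful than the paper's, in that you explicitly flag the need for the intermediate subquotients to be locally free (which indeed follows from Proposition~\ref{p:bundles} and Lemma~\ref{le:Fij}, together with downward induction on the filtration), and you track the $h^{p-1}$ coefficient rather than simply reducing modulo $h^{p-1}$ as the paper does.
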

\begin{proof} 
Since $M$ has stable Jordan type $[1]^s$, we have $\cF_2(M) = \ldots = \cF_{p-1}(M) =0$. Hence,
the trivial vector bundle
$\wt M$ has a filtration with filtered quotients (not in order)
$\cF_1(M)$, $\cF_p(M)$, $\cF_p(M)(1)$, \dots, $\cF_p(M)(p-1)$.
So we have
\begin{align*} 
1 &= c(\wt M,h) \\
&= c(\cF_1(M),h)c(\cF_p(M),h)c(\cF_p(M)(1),h)\cdots c(\cF_p(M)(p-1),h) \\
&\equiv c(\cF_1(M),h) \pmod{(p,h^{p-1})}
\end{align*}
by Lemma \ref{le:product-twists}. It follows that the coefficients 
$c_m(\cF_1(M))$ are divisible by $p$ for $1\le m\le p-2$.
\end{proof}

\begin{remark} For $p=2$ this theorem says nothing. But for $p$ odd, 
it at least forces
$c_1(\cF_1(M))$ to be divisible by $p$. As an explicit example, the 
twists of the Horrocks--Mumford bundle  $\cF_\mathsf{HM}(i)$ have
$c_1=2i+5$ and $c_2=i^2+5i+10$ (\cite{Horrocks/Mumford:1973a}). 
For $p\ge 7$ these cannot both
be divisible by $p$, and so there is no module $M$ of stable constant 
Jordan type $[1]^2$ and integer $i$ such that $\cF_1(M)\cong\cF_\mathsf{HM}(i)$.
\end{remark}

\begin{remark}  The conclusion of the theorem is limited to the modules 
of stable constant Jordan type $[1]^s$. For example, if $M_n$ is a 
``zig-zag" module of dimension $2n+1$ for $\Z/p \times \Z/p$, then 
$\cF_1(M_n) \simeq \cO(-n)$ and $\cF_1(M_n^*) \simeq \cO(n)$  for any 
$n\geq 0$ (see \cite[\S 6]{Friedlander/Pevtsova:constr}).
\end{remark}

\noindent
{\bf Acknowledgement.} Both authors are grateful to MSRI for its 
hospitality during the 2008 program ``Representation theory of 
finite groups and related topics'' where much of this research was
carried out.

\bibliographystyle{amsplain}
\bibliography{../repcoh}

\providecommand{\bysame}{\leavevmode\hbox to3em{\hrulefill}\thinspace}
\providecommand{\MR}{\relax\ifhmode\unskip\space\fi MR }
% \MRhref is called by the amsart/book/proc definition of \MR.
\providecommand{\MRhref}[2]{%
  \href{http://www.ams.org/mathscinet-getitem?mr=#1}{#2}
}
\providecommand{\href}[2]{#2}
\begin{thebibliography}{10}

\bibitem{Benson:horrocks}
D.~J. Benson, \emph{{Modules of constant Jordan type and the Horrocks--Mumford
  bundle}}, preprint, 2008.

\bibitem{Benson:2010a}
\bysame, \emph{{Modules of constant Jordan type with one non-projective
  block}}, Algebras and Representation Theory \textbf{13} (2010), 315--318.

\bibitem{Bernstein/Gelfand/Gelfand:1978a}
I.~N. Bern{\v{s}}te{\u\i}n, I.~M. Gel$'$fand, and S.~I. Gel$'$fand,
  \emph{{Algebraic vector bundles on {$\mathbb P^n$} and problems of linear
  algebra}}, Funktsional. Anal. i Prilozhen. \textbf{12} (1978), no.~3, 66--67.

\bibitem{Carlson/Friedlander:2009a}
J.~F. Carlson and E.~M. Friedlander, \emph{{Exact category of modules of
  constant Jordan type}}, Algebra, arithmetic and geometry: Manin Festschrift,
  Progr. in Math., vol. 269, Birkh\"auser Verlag, Basel, 2009, pp.~259--281.

\bibitem{Carlson/Friedlander/Pevtsova:2008a}
J.~F. Carlson, E.~M. Friedlander, and J.~Pevtsova, \emph{{Modules of constant
  Jordan type}}, J.~Reine \& Angew.\ Math. \textbf{614} (2008), 191--234.

\bibitem{Carlson/Friedlander/Suslin:rank2}
J.~F. Carlson, E.~M. Friedlander, and A.~A. Suslin, \emph{{Modules for $\mathbb
  Z/p \times \mathbb Z/p$}}, to appear.

\bibitem{Friedlander/Pevtsova:constr}
E.~M. Friedlander and J.~Pevtsova, \emph{{Constructions for infinitesimal group
  schemes}}, to appear.

\bibitem{Friedlander/Pevtsova:2010a}
\bysame, \emph{{Generalized support varieties for finite group schemes}},
  Documenta Math. \textbf{Extra Volume Suslin} (2010), 197--222.

\bibitem{Fulton:1984a}
W.~Fulton, \emph{{Intersection theory}}, Ergebnisse der Mathematik und ihrer
  Grenzgebiete, Folge 3, Band 2, Springer-Verlag, Ber\-lin/New York, 1984.

\bibitem{Hartshorne:1977a}
R.~Hartshorne, \emph{{Algebraic geometry}}, Graduate Texts in Mathematics,
  vol.~52, Springer-Verlag, Ber\-lin/New York, 1977.

\bibitem{Horrocks/Mumford:1973a}
G.~Horrocks and D.~Mumford, \emph{{A rank $2$ vector bundle on $\mathbb P^4$
  with $15,000$ symmetries}}, Topology \textbf{12} (1973), 63--81.

\end{thebibliography}

\end{document}